\newtheorem{thm}{Theorem}[section]
\newtheorem{rmk}{Remark}[section]
\newtheorem{definition}{Definition}[section]
\newtheorem{lemma}{Lemma}[section]
\newtheorem{propo}{Proposition}[section]
\newtheorem{coro}{Corollary}[section]
\newtheorem{ex}{Example}[section]
\numberwithin{equation}{section}
\def\HH{ \EuFrak H}
\def\N{{\rm I\kern-0.16em N}}
\def\R{{\rm I\kern-0.16em R}}
\def \E{{\rm I\kern-0.16em E}}
\def\P{{\rm I\kern-0.16em P}}
\def\F{{\rm I\kern-0.16em F}}
\def\B{{\rm I\kern-0.16em B}}
\def\C{{\rm I\kern-0.46em C}}
\def\G{{\rm I\kern-0.50em G}}
\newcommand{\K}{\mathrm{\textbf{Ker}}}
\newcommand{\Id}{\mathrm{\textbf{Id}}}
\newcommand{\LL}{\mathrm{L}}
\newcommand{\ud}{\mathrm{d}}
\numberwithin{equation}{section}
\font\eka=cmex10
\def\ind{\mathrel{\hbox{\rlap{%
\hbox to 7.5pt{\hrulefill}}\raise6.6pt\hbox{\eka\char'167}}}}
\begin{document}

\title{\textbf{Convergence towards linear combinations \\ of chi-squared random variables: \\ a Malliavin-based approach}}

\author{Ehsan Azmoodeh \\ \small{Unit\'{e} de Recherche en Math\'{e}matiques, Luxembourg University}\\ \medskip\\
Giovanni Peccati \\ \small{Unit\'{e} de Recherche en Math\'{e}matiques, Luxembourg University}\\ \smallskip\\
Guillaume Poly \\ \small{Institut de Recherche Math\'ematiques de Rennes}}



\maketitle

\center{ {\it Dedicated to the memory of Marc Yor}}

\abstract 

We investigate the problem of finding necessary and sufficient conditions for convergence in distribution towards a general finite linear combination of independent chi-squared random variables, within the framework of random objects living on a fixed Gaussian space. Using a recent representation of cumulants in terms of the Malliavin calculus operators $\Gamma_i$ (introduced by Nourdin and Peccati in \cite{n-pe-3}), we provide conditions that apply to random variables living in a finite sum of Wiener chaoses. As an important by-product of our analysis, we shall derive a new proof and a new interpretation of a recent finding by Nourdin and Poly \cite{n-po-1}, concerning the limiting behaviour of random variables living in a Wiener chaos of order two. Our analysis contributes to a fertile line of research, that originates from questions raised by Marc Yor, in the framework of limit theorems for non-linear functionals of Brownian local times.

\vskip0.3cm
\noindent {\bf Keywords}: Gaussian analysis, Malliavin calculus, multiple integrals, cumulants, noncentral limit theorems, weak convergence.

\noindent{\bf MSC 2010:}  60F05; 60G15; 60H07
\tableofcontents
\section{Introduction}

The aim of this paper is to provide necessary and sufficient conditions (expressed in terms of Malliavin operators), ensuring that a sequence of random variables living in a finite sum of Wiener chaoses converges in distribution towards a finite linear combination of independent centered chi-squared random variables. As discussed below, we regard the results of the present paper as a first step towards the solution of an open and notoriously difficult problem, namely: {\it can one derive necessary and sufficient analytical conditions, ensuring that a given sequence of smooth functionals of a Gaussian field converge in distribution towards an element of the second Wiener chaos?} Finite linear combinations of independent chi-squared random variables represent indeed the most elementary instance of random objects living in the second Wiener chaos of a Gaussian field (see Section \ref{ss:swc} below for a discussion of this point). More sophisticated examples -- that are crucial for applications and lay at present largely outside the scope of Malliavin-type techniques -- include the so-called {\it Rosenblatt distribution}; see e.g. \cite{tudor_book} for a detailed discussion of these objects. 

\subsection{Overview}

We refer the reader to \cite{n-pe-1}, as well as Section 2 below, for any unexplained notion evoked in the present section. Let $W = \{W(h) : \HH\}$ be an isonormal Gaussian process over some real separable Hilbert space $\HH$ and let $q\geq 1$. For every deterministic symmetric kernel $f \in \HH^{\odot q}$, we denote by $I_q(f)$ the multiple stochastic Wiener-It\^o integral of $f$ with respect to $W$.  Random variables of the form $I_q(f)$ compose the so-called $q$th {\it Wiener chaos} associated with $W$. The concept of Wiener chaos represents a rough infinite-dimensional analogous of the Hermite polynomials for the one-dimensional Gaussian distribution (see e.g. \cite{n-pe-1, PecTaq} for a detailed discussion of these objects). 

The following two results, proved respectively in \cite{NO, n-p-05} and \cite{n-pe-2}, contain an exhaustive characterization of normal and Gamma approximations on Wiener chaos. As in \cite{n-pe-2}, we denote by  $F(\nu)$ a centered random variable with the law of $2G(\nu/2)-\nu$, where
$G(\nu/2)$ has a Gamma distribution with parameter $\nu/2$. In particular, when $\nu\geq 1$ is an integer, then $F(\nu)$ has a centered $\chi^2$ distribution with $\nu$ degrees of freedom. 

\begin{thm}\label{T:NPEC} {\rm (A) (See \cite{NO, n-p-05})} Denote by $D$ the Malliavin derivative associated with $W$. Let $N\sim \mathcal{N}(0,1)$, fix $q\geq 2$ and let $I_q(f_n)$ be a sequence of multiple stochastic integrals with respect to $W$, with each $f_n$ a an element of $\HH^{\odot q}$ such that $E[I_q(f_n)^2] = 1$. Then, the following are equivalent, as $n\to \infty$: 
\begin{enumerate}
\item[ \rm (i) ] $I(f_n)$ converges in distribution to $N$;
\item[ \rm (ii) ] $E[I_q(f_n)^4]\to E[N^4] = 3$;
\item[\rm (iii)] $q^{-1} \|DI_q(f_n)\|^2_\HH  \to 1$ in $L^2(\Omega)$.
\end{enumerate}

\smallskip

\noindent {\rm (B) (See \cite{n-pe-2})} Fix $\nu >0$, and let $F(\nu)$ have the centered Gamma distribution described above. Let $q\geq 2$ be an even integer, and let $I_q(f_n)$ be a sequence of multiple integrals, with each $f_n\in \HH^{\odot q}$ verifying $E[I_q(f_n)^2]  =2\nu$. Then, the following are equivalent, as $n\to \infty$: 
\begin{enumerate}
\item[ \rm (i)] $I_q(f_n)$ converges in distribution to $F(\nu)$;
\item[ \rm (ii)] $E[I(f_n)^4] -12E[I_q(f_n)^3]  \to E[F(\nu)^4] -12E[F(\nu)^3]   = 12\nu^2 - 48\nu$;
\item[\rm (iii)] $\|DI_q(f_n)\|^2_\HH-2qI_q(f_n)-2q\nu \to 0$, in $L^2(\Omega)$.
\end{enumerate}
\end{thm}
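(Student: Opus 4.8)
Both parts have the same three-step architecture, only the target law and its associated Stein operator change. Writing $F=I_q(f)$, one records that $F$ lies in the domain of $D$, that $-L^{-1}F=\tfrac1q F$ with $L$ the Ornstein--Uhlenbeck generator, and hence that the bilinear quantity entering the Malliavin--Stein inequalities is $\langle DF,-DL^{-1}F\rangle_\HH=\tfrac1q\|DF\|^2_\HH$. The plan is: (a) close (i) $\Rightarrow$ (ii) by uniform integrability; (b) prove (ii) $\Leftrightarrow$ (iii) by expanding all the relevant moments and the variance of $\tfrac1q\|DF\|^2_\HH$ into a common basis of squared contraction norms; and (c) close the cycle with (iii) $\Rightarrow$ (i) via Malliavin--Stein (see \cite{n-pe-1}).

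\textbf{Part (A).} For (i) $\Rightarrow$ (ii): hypercontractivity on the chaos of order $q$ gives $\sup_n E[|I_q(f_n)|^p]<\infty$ for every $p$, so $\{I_q(f_n)^4\}_n$ is uniformly integrable and convergence in law forces $E[I_q(f_n)^4]\to E[N^4]=3$. For (ii) $\Leftrightarrow$ (iii): the product formula $I_q(f)^2=\sum_{r=0}^{q}r!\binom{q}{r}^2 I_{2q-2r}(f\widetilde\otimes_r f)$ together with the isometry of multiple integrals yields exact expansions
$$ E[I_q(f)^4]-3=\sum_{r=1}^{q-1}a_{q,r}\,\|f\widetilde\otimes_r f\|^2_{\HH^{\otimes 2(q-r)}},\qquad \mathrm{Var}\!\Big(\tfrac1q\|DI_q(f)\|^2_\HH\Big)=\sum_{r=1}^{q-1}b_{q,r}\,\|f\widetilde\otimes_r f\|^2_{\HH^{\otimes 2(q-r)}}, $$
with strictly positive constants $a_{q,r},b_{q,r}$, after using $E[\tfrac1q\|DF\|^2_\HH]=E[F^2]=1$. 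Since both right-hand sides are nonnegative combinations of the \emph{same} finite family, each tends to $0$ iff $\|f_n\widetilde\otimes_r f_n\|\to0$ for every $r\in\{1,\dots,q-1\}$, which gives (ii) $\Leftrightarrow$ (iii). For (iii) $\Rightarrow$ (i): the Malliavin--Stein bound gives $d_W(I_q(f_n),N)\le E\big|1-\tfrac1q\|DI_q(f_n)\|^2_\HH\big|\le\sqrt{\mathrm{Var}(\tfrac1q\|DI_q(f_n)\|^2_\HH)}\to0$ (one upgrades to total variation since elements of a chaos have a density), closing the cycle.

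\textbf{Part (B).} Here $q$ is even and the target $F(\nu)$ has Stein operator $\mathcal A g(x)=2(x+\nu)g'(x)-xg(x)$. Step (i) $\Rightarrow$ (ii) is again hypercontractivity, so $E[I_q(f_n)^3],E[I_q(f_n)^4]$ converge to $E[F(\nu)^3],E[F(\nu)^4]$, and $E[F(\nu)^4]-12E[F(\nu)^3]=12\nu^2-48\nu$ by a direct Gamma-moment computation. For (ii) $\Leftrightarrow$ (iii): set $Y_n:=\tfrac1q\|DI_q(f_n)\|^2_\HH-2I_q(f_n)$ and note $E[Y_n]=E[I_q(f_n)^2]=2\nu$, so (iii) is exactly $\mathrm{Var}(Y_n)\to0$. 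Expanding $Y_n$ into Wiener chaoses — the $\tfrac1q\|DF\|^2$ part has even-order components, one of which (of order $q$) interacts with $2I_q(f_n)$ — and computing $\mathrm{Var}(Y_n)$ via the isometry again produces a nonnegative combination of the $\|f_n\widetilde\otimes_r f_n\|^2$; the delicate point is that the contribution of the chaos-$q$ component of $Y_n$ reproduces, modulo terms that are $o(1)$ once the remaining contraction norms vanish, precisely $E[I_q(f_n)^4]-12E[I_q(f_n)^3]-(12\nu^2-48\nu)$. It is cleanest to rewrite this via cumulants, $E[F^4]-12E[F^3]-(12\nu^2-48\nu)=\big(\kappa_4(F)-\kappa_4(F(\nu))\big)-12\big(\kappa_3(F)-\kappa_3(F(\nu))\big)$, and to match coefficients using the Malliavin representation of $\kappa_3,\kappa_4$. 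Finally (iii) $\Rightarrow$ (i): the Gamma Malliavin--Stein bound gives $d(I_q(f_n),F(\nu))\le C\,E\big|2\nu+2I_q(f_n)-\tfrac1q\|DI_q(f_n)\|^2_\HH\big|=C\,E|2\nu-Y_n|\le C\sqrt{\mathrm{Var}(Y_n)}\to0$.

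\textbf{Main obstacle.} The substance is the contraction-norm accounting behind (ii) $\Leftrightarrow$ (iii): one needs the standard lemma that, for $r\in\{1,\dots,q-1\}$, $\|f_n\widetilde\otimes_r f_n\|\to0$ for all $r$ is equivalent to $\|f_n\otimes_r f_n\|\to0$ for all $r$, and, above all in Part (B), the exact cancellation showing that ``fourth moment minus twelve times third moment'' is the combination matching $\mathrm{Var}(Y_n)$ — this requires tracking the combinatorial constants spat out by the product formula and checking that the cross term attached to $f_n\widetilde\otimes_{q/2}f_n$ aligns with the $-2I_q(f_n)$ correction. A secondary, more routine difficulty is justifying the Malliavin--Stein inequalities for these targets (the Gamma Stein equation has an unbounded coefficient), but the integrability this demands is automatic from hypercontractivity on a fixed sum of chaoses.
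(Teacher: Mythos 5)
The paper does not prove Theorem \ref{T:NPEC}: it is quoted as known background, with Part (A) attributed to \cite{NO, n-p-05} and Part (B) to \cite{n-pe-2}, so there is no internal proof to compare yours against. Judged on its own terms, your outline reproduces the architecture of the published proofs correctly: hypercontractivity on a fixed chaos gives (i) $\Rightarrow$ (ii); the exact expansions of $E[F^4]-3(E[F^2])^2$ and of $\mathrm{Var}\big(q^{-1}\|DF\|^2_{\HH}\big)$ as strictly positive combinations of the same family $\|f\,\widetilde{\otimes}_r\, f\|^2$, $1\le r\le q-1$, give (ii) $\Leftrightarrow$ (iii); and the Malliavin--Stein bounds close the loop (iii) $\Rightarrow$ (i). The identification $q^{-1}\|DF\|^2_{\HH}=\langle DF,-DL^{-1}F\rangle_{\HH}=\Gamma_1(F)$ that you start from is also precisely the link the present paper exploits in the remark following its main theorem.

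The one substantive point you leave undone is the one you yourself flag: in Part (B), the identity expressing both $E[F^4]-12E[F^3]-(12\nu^2-48\nu)$ and $E\big[(q^{-1}\|DF\|^2_{\HH}-2F-2\nu)^2\big]$ as positive combinations of the quantities $\|f\,\widetilde{\otimes}_r\, f\|^2$ for $r\neq q/2$ together with $\|c_q\, f\,\widetilde{\otimes}_{q/2}\, f-2f\|^2$ for an explicit $c_q>0$. That matching of product-formula constants \emph{is} the content of (ii) $\Leftrightarrow$ (iii) in Part (B) -- in particular the cancellation of the chaos-$q$ cross term against the $-2I_q(f_n)$ correction, which also explains why $q$ must be even -- so as written your Part (B) is a correct plan rather than a complete argument. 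Everything else checks out: the Stein operator $2(x+\nu)g'(x)-xg(x)$ for the centered Gamma law, the value $E[F(\nu)^4]-12E[F(\nu)^3]=12\nu^2-48\nu$ (via $\kappa_j(F(\nu))=2^{j-1}\nu(j-1)!$), and the reduction of condition (iii) to $\mathrm{Var}(Y_n)\to 0$ with $Y_n=q^{-1}\|DF_n\|^2_{\HH}-2F_n$, $E[Y_n]=2\nu$.
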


The line of research associated with the content of Theorem \ref{T:NPEC} originates from some deep questions asked by Marc Yor, about the asymptotic behaviour of non-linear functionals of Brownian local times (partially addressed in references \cite{py1, py2}). As demonstrated e.g. in \cite{n-p-05}, results of this type are intimately connected to the powerful technique of {\it Brownian time changes} and associated limit theorems (a beautiful discussion of these topics can be found in \cite[Chapter V and Chapter XIII]{RY}): as such, they provide a drastic simplification of the so-called {\it method of moments} for probabilistic approximations. 

\smallskip

Theorem \ref{T:NPEC} has triggered a huge amount of applications and generalizations, involving e.g. Stein's method, stochastic geometry, free probability, power variations of Gaussian processes and analysis of isotropic fields o homogeneous spaces. See \cite{n-pe-2} for an introduction to this field of research. See \cite{WWW} for a constantly updated web resource, with links to all available papers.

\medskip

As anticipated, the aim of the present paper is to address the following question: {\it for a general $q$, is it possible to prove a statement similar to Part {\rm (B)} of Theorem \ref{T:NPEC}, when the the target distribution $F(\nu)$ is replaced by an object of the type
\begin{equation}\label{e:introtarget}
F_\infty = \sum_{i=1}^k \alpha_i(N_i^2-1),
\end{equation}
where $k$ is a finite integer, the $\alpha_i$, $i=1,...,k$, are pairwise distinct real numbers, and $\{N_i : i=1,...,k\}$ is a collection of i.i.d. $\mathcal{N}(0,1)$ random variables?} 

The following remarks are in order

\begin{itemize}
\item[--] In the case $q=2$ (that is, when the involved sequence of stochastic integrals belong to the second Wiener chaos of $W$), the question has been completely answered by Nourdin and Poly \cite{n-po-1}.

\item[--] The case $k=\alpha_1 = 1$ corresponds to Part (B) of Theorem \ref{T:NPEC}, in the special case $\nu=1$.

\item[--] When $k=2$ and $\alpha_1=\frac12 = -\alpha_2$, then one has that $F_\infty$ has the same law as the random variable $N_1\times N_2$. It is a well-known fact that the law of this random variable belongs to the general class of {\it Variance-Gamma} distributions: it follows that, in this special case, convergence towards $F_\infty$ could be studied by means of the general Malliavin-Stein techniques developed by Eichelsbacher and Th\"ale in the (independently written) paper \cite{et} (see also \cite{gaunt} for some related estimates). We observe that, in contrast to the present paper, the techniques developed in \cite{et} yield explicit rates of convergence in some probability metric. On the other hand, our approach allows one to deal with target probability distributions that fall outside the class of Variance-Gamma laws, as well as to deduce necessary conditions for the convergence to take place.
\end{itemize}

In order to deal with the previously stated problem, one cannot rely on techniques that have been used in the previous literature on related subjects. In particular:

\begin{itemize}
\item[(a)] For a general choice of $k$ and $\alpha_1, \cdots, \alpha_k$ there is no suitable version of Stein's method that can be applied to the random variable $F_\infty$ in \eqref{e:introtarget}, so that the Malliavin-Stein approach for normal and Gamma approximations developed in \cite{n-pe-ptrf} cannot be used. 

\item[(b)] For a general choice of $k$ and $\alpha_1, \cdots, \alpha_k$, it seems difficult to represent the characteristic function of $F_\infty$ as the solution of an ordinary differential equation: it follows that the characteristic function approach exploited in \cite{n-pe-2, NO} is not adapted to the framework of the present paper.

\item[(c)] The analytical approach used in \cite{n-po-1} (for the case $q=2$) cannot be applied in the case of a general order $q\geq 3$ since, in this case, the characteristic function of a non-zero random variable of the type $I_q(f)$ is not analytically known.

\end{itemize}

The main contribution of the present paper (stated in Theorem \ref{main-general-thm}) is a full generalisation of the double implication (iii) $\leftrightarrow$ (i) in the statement of  Theorem \ref{T:NPEC}-(B) to the case of a general target random variable of the form \eqref{e:introtarget} and of a general sequence of random variables living in a finite sum of Wiener chaoses. Our approach is based on a suitable extension of the method of moments, that relies in turn on several extensions of the results proved in \cite{n-po-1}. One should notice that our findings involve the operators $\Gamma_i$ from Malliavin calculus, as introduced in the reference \cite{n-pe-3} (see also \cite[Chapter 8]{n-pe-1}).

\begin{rmk}{\rm For the time being (and for technical reasons that will clearly appear in the sections to follow), it seems very arduous to extend the double implication (ii) $\leftrightarrow$ (i) in the statement of  Theorem \ref{T:NPEC}-(B).}
\end{rmk}

\subsection{Plan}

The paper is organized as follows. Section $2$ contains some preliminary materials including basic facts on Gaussian analysis and Malliavin calculus. Section $3$ is devoted to our main results on a general criterion for 
convergence in distribution towards chi-squared combinations, whereas Section 4 provides some examples.  

\section{Elements of Gaussian analysis and Malliavin calculus}\label{gaussian+malliavin}

This section contains the essential elements of Gaussian analysis and Malliavin calculus that are used in this paper. See for instance the references \cite{n-pe-1, nualart} for further details.

\subsection{Isonormal processes and multiple integrals}\label{ss:isonormal}

Let $ \EuFrak H$ be a real separable Hilbert space. For any $q\geq 1$, we write $ \EuFrak H^{\otimes q}$ and $ \EuFrak H^{\odot q}$ to
indicate, respectively, the $q$th tensor power and the $q$th symmetric tensor power of $ \HH$; we also set by convention
$ \EuFrak H^{\otimes 0} =  \HH^{\odot 0} =\R$. When $\HH = L^2(A,\mathcal{A}, \mu) =:L^2(\mu)$, where $\mu$ is a $\sigma$-finite
and non-atomic measure on the measurable space $(A,\mathcal{A})$, then $ \EuFrak H^{\otimes q} = L^2(A^q,\mathcal{A}^q,\mu^q)=:L^2(\mu^q)$, and $ \EuFrak H^{\odot q} = L_s^2(A^q,\mathcal{A}^q,\mu^q) := L_s^2(\mu^q)$, 
where $L_s^2(\mu^q)$ stands for the subspace of $L^2(\mu^q)$ composed of those functions that are $\mu^q$-almost everywhere symmetric. We denote by $W=\{W(h) : h\in  \EuFrak H\}$
an {\it isonormal Gaussian process} over $ \EuFrak H$. This means that $W$ is a centered Gaussian family, defined on some probability space $(\Omega ,\mathcal{F},P)$, with a covariance structure given by the relation
$E\left[ W(h)W(g)\right] =\langle h,g\rangle _{ \EuFrak H}$. We also assume that $\mathcal{F}=\sigma(W)$, that is, $\mathcal{F}$ is generated by $W$, and use the shorthand notation $L^2(\Omega) := L^2(\Omega, \mathcal{F}, P)$.

For every $q\geq 1$, the symbol $C_{q}$ stands for the $q$th {\it Wiener chaos} of $W$, defined as the closed linear subspace of $L^2(\Omega ,\mathcal{F},\P) =:L^2(\Omega) $
generated by the family $\{H_{q}(W(h)) : h\in  \EuFrak H,\left\| h\right\| _{ \EuFrak H}=1\}$, where $H_{q}$ is the $q$th Hermite polynomial, defined as follows:
\begin{equation}\label{hq}
H_q(x) = (-1)^q e^{\frac{x^2}{2}}
 \frac{d^q}{dx^q} \big( e^{-\frac{x^2}{2}} \big).
\end{equation}
We write by convention $C_{0} = \mathbb{R}$. For
any $q\geq 1$, the mapping $I_{q}(h^{\otimes q})=H_{q}(W(h))$ can be extended to a
linear isometry between the symmetric tensor product $ \EuFrak H^{\odot q}$
(equipped with the modified norm $\sqrt{q!}\left\| \cdot \right\| _{ \EuFrak H^{\otimes q}}$)
and the $q$th Wiener chaos $C_{q}$. For $q=0$, we write by convention $I_{0}(c)=c$, $c\in\mathbb{R}$.

It is well-known that $L^2(\Omega)$ can be decomposed into the infinite orthogonal sum of the spaces $C_{q}$: this means that any square-integrable random variable
$F\in L^2(\Omega)$ admits the following {\it Wiener-It\^{o} chaotic expansion}
\begin{equation}
F=\sum_{q=0}^{\infty }I_{q}(f_{q}),  \label{E}
\end{equation}
where the series converges in $L^2(\Omega)$, $f_{0}=E[F]$, and the kernels $f_{q}\in  \EuFrak H^{\odot q}$, $q\geq 1$, are
uniquely determined by $F$. For every $q\geq 0$, we denote by $J_{q}$ the
orthogonal projection operator on the $q$th Wiener chaos. In particular, if
$F\in L^2(\Omega)$ has the form (\ref{E}), then
$J_{q}F=I_{q}(f_{q})$ for every $q\geq 0$.

Let $\{e_{k},\,k\geq 1\}$ be a complete orthonormal system i $\HH$. Given $f\in  \EuFrak H^{\odot p}$ and $g\in \EuFrak H^{\odot q}$, for every
$r=0,\ldots ,p\wedge q$, the \textit{contraction} of $f$ and $g$ of order $r$
is the element of $ \EuFrak H^{\otimes (p+q-2r)}$ defined by
\begin{equation}
f\otimes _{r}g=\sum_{i_{1},\ldots ,i_{r}=1}^{\infty }\langle
f,e_{i_{1}}\otimes \ldots \otimes e_{i_{r}}\rangle _{ \EuFrak H^{\otimes
r}}\otimes \langle g,e_{i_{1}}\otimes \ldots \otimes e_{i_{r}}
\rangle_{ \EuFrak H^{\otimes r}}.  \label{v2}
\end{equation}
Notice that the definition of $f\otimes_r g$ does not depend
on the particular choice of $\{e_k,\,k\geq 1\}$, and that
$f\otimes _{r}g$ is not necessarily symmetric; we denote its
symmetrization by $f\widetilde{\otimes }_{r}g\in  \EuFrak H^{\odot (p+q-2r)}$.
Moreover, $f\otimes _{0}g=f\otimes g$ equals the tensor product of $f$ and
$g$ while, for $p=q$, $f\otimes _{q}g=\langle f,g\rangle _{ \EuFrak H^{\otimes q}}$. 
When $\HH = L^2(A,\mathcal{A},\mu)$ and $r=1,...,p\wedge q$, the contraction $f\otimes _{r}g$ is the element of $L^2(\mu^{p+q-2r})$ given by
\begin{eqnarray}\label{e:contraction}
&& f\otimes _{r}g (x_1,...,x_{p+q-2r})\\
&& = \int_{A^r} f(x_1,...,x_{p-r},a_1,...,a_r)\times \notag\\
&& \quad\quad\quad\quad \times g(x_{p-r+1},...,x_{p+q-2r},a_1,...,a_r)d\mu(a_1)...d\mu(a_r). \notag
\end{eqnarray}

It is a standard fact of Gaussian analysis that the following {\it multiplication formula} holds: if $f\in  \EuFrak
H^{\odot p}$ and $g\in  \EuFrak
H^{\odot q}$, then
\begin{eqnarray}\label{multiplication}
I_p(f) I_q(g) = \sum_{r=0}^{p \wedge q} r! {p \choose r}{ q \choose r} I_{p+q-2r} (f\widetilde{\otimes}_{r}g).
\end{eqnarray}
\smallskip

\subsection{Malliavin operators}\label{ss:mall}

We now introduce some basic elements of the Malliavin calculus with respect
to the isonormal Gaussian process $W$. Let $\mathcal{S}$
be the set of all
cylindrical random variables of
the form
\begin{equation}
F=g\left( W(\phi _{1}),\ldots ,W(\phi _{n})\right) ,  \label{v3}
\end{equation}
where $n\geq 1$, $g:\mathbb{R}^{n}\rightarrow \mathbb{R}$ is an infinitely
differentiable function such that its partial derivatives have polynomial growth, and $\phi _{i}\in  EuFrak H$,
$i=1,\ldots,n$.
The {\it Malliavin derivative}  of $F$ with respect to $W$ is the element of $L^2(\Omega , \EuFrak H)$ defined as
\begin{equation*}
DF\;=\;\sum_{i=1}^{n}\frac{\partial g}{\partial x_{i}}\left( W(\phi_{1}),\ldots ,W(\phi _{n})\right) \phi _{i}.
\end{equation*}
In particular, $DW(h)=h$ for every $h\in  \EuFrak H$. By iteration, one can define the $m$th derivative $D^{m}F$, which is an element of $L^2(\Omega , \EuFrak H^{\odot m})$,
for every $m\geq 2$.
For $m\geq 1$ and $p\geq 1$, ${\mathbb{D}}^{m,p}$ denotes the closure of
$\mathcal{S}$ with respect to the norm $\Vert \cdot \Vert _{m,p}$, defined by
the relation
\begin{equation*}
\Vert F\Vert _{m,p}^{p}\;=\;E\left[ |F|^{p}\right] +\sum_{i=1}^{m}E\left[
\Vert D^{i}F\Vert _{ EuFrak H^{\otimes i}}^{p}\right].
\end{equation*}
We often use the (canonical) notation $\mathbb{D}^{\infty} := \bigcap_{m\geq 1}
\bigcap_{p\geq 1}\mathbb{D}^{m,p}$.

\begin{rmk}\label{r:density}{\rm It is a well-known fact that any random variable $F$ that is a finite linear combination of multiple Wiener-It\^o integrals is an element of $\mathbb{D}^\infty$. }
\end{rmk}

The Malliavin derivative $D$ obeys the following \textsl{chain rule}. If
$\varphi :\mathbb{R}^{n}\rightarrow \mathbb{R}$ is continuously
differentiable with bounded partial derivatives and if $F=(F_{1},\ldots
,F_{n})$ is a vector of elements of ${\mathbb{D}}^{1,2}$, then $\varphi
(F)\in {\mathbb{D}}^{1,2}$ and
\begin{equation}\label{e:chainrule}
D\,\varphi (F)=\sum_{i=1}^{n}\frac{\partial \varphi }{\partial x_{i}}(F)DF_{i}.
\end{equation}

Note also that a random variable $F$ as in (\ref{E}) is in ${\mathbb{D}}^{1,2}$ if and only if
$\sum_{q=1}^{\infty }q\|J_qF\|^2_{L^2(\Omega)}<\infty$
and in this case one has the following explicit relation: $$E\left[ \Vert DF\Vert _{ \EuFrak H}^{2}\right]
=\sum_{q=1}^{\infty }q\|J_qF\|^2_{L^2(\Omega)}.$$ If $ \EuFrak H=
L^{2}(A,\mathcal{A},\mu )$ (with $\mu $ non-atomic), then the
derivative of a random variable $F$ as in (\ref{E}) can be identified with
the element of $L^2(A \times \Omega )$ given by
\begin{equation}
D_{t}F=\sum_{q=1}^{\infty }qI_{q-1}\left( f_{q}(\cdot ,t)\right) ,\quad t \in A.  \label{dtf}
\end{equation}


The operator $L$, defined as $L=\sum_{q=0}^{\infty }-qJ_{q}$, is the {\it infinitesimal generator of the Ornstein-Uhlenbeck semigroup}. The domain of $L$ is
\begin{equation*}
\mathrm{Dom}L=\{F\in L^2(\Omega ):\sum_{q=1}^{\infty }q^{2}\left\|
J_{q}F\right\| _{L^2(\Omega )}^{2}<\infty \}=\mathbb{D}^{2,2}\text{.}
\end{equation*}


For any $F \in L^2(\Omega )$, we define $L^{-1}F =\sum_{q=1}^{\infty }-\frac{1}{q} J_{q}(F)$. The operator $L^{-1}$ is called the
\textit{pseudo-inverse} of $L$. Indeed, for any $F \in L^2(\Omega )$, we have that $L^{-1} F \in  \mathrm{Dom}L
= \mathbb{D}^{2,2}$,
and
\begin{equation}\label{Lmoins1}
LL^{-1} F = F - E(F).
\end{equation}

The following integration by parts formula is used throughout the paper.

\begin{lemma}\label{L : Tech1}
Suppose that $F\in\mathbb{D}^{1,2}$ and $G\in L^2(\Omega)$. Then, $L^{-1}G \in \mathbb{D}^{2,2}$ and
\begin{equation}
E[FG] = E[F]E[G]+E[\langle DF,-DL^{-1}G\rangle_{\HH}].
\end{equation}
\end{lemma}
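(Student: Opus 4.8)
The plan is to exploit the chaos decomposition of $G$ together with the fact that $L^{-1}$ is diagonalized by the Wiener chaoses, reducing everything to an identity between $F$ and $L L^{-1} G$. First I would note that, since $G \in L^2(\Omega)$, the random variable $L^{-1}G = \sum_{q\geq 1} -\tfrac1q J_q(G)$ satisfies $\sum_{q\geq 1} q^2 \|{-}\tfrac1q J_q G\|^2_{L^2(\Omega)} = \sum_{q\geq 1}\|J_q G\|^2_{L^2(\Omega)} \leq \|G\|^2_{L^2(\Omega)} < \infty$, so $L^{-1}G \in \mathrm{Dom}\,L = \mathbb{D}^{2,2}$; this justifies the first assertion and makes $\langle DF, -DL^{-1}G\rangle_{\HH}$ a well-defined object in $L^1(\Omega)$ (by Cauchy--Schwarz, using $F \in \mathbb{D}^{1,2}$ and $L^{-1}G \in \mathbb{D}^{1,2}$).

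Next I would recall the relation $L L^{-1} G = G - E[G]$ from \eqref{Lmoins1}. The heart of the argument is then the classical duality (integration by parts) between the Malliavin derivative $D$ and the generator $L$, namely that for $F\in\mathbb{D}^{1,2}$ and $H\in\mathbb{D}^{2,2}$ one has $E[\langle DF, -DH\rangle_{\HH}] = E[F\, L H]$. I would prove this by first checking it on chaoses: if $F = I_p(f)$ and $H = I_q(g)$, then $DF = p I_{p-1}(f(\cdot))$, $DH = q I_{q-1}(g(\cdot))$, and $\langle DF, -DH\rangle_{\HH}$ has expectation $-pq\, E[\langle I_{p-1}(f(\cdot)), I_{q-1}(g(\cdot))\rangle_{\HH}]$, which by the isometry property of multiple integrals vanishes unless $p=q$ and equals $-q\, q!\langle f,g\rangle_{\HH^{\otimes q}} = -q\, E[I_q(f)I_q(g)] = E[F\, LH]$ when $p=q$. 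Summing over the chaotic components of $F$ and $H$ (and using orthogonality to kill cross terms) extends the identity to general $F\in\mathbb{D}^{1,2}$, $H\in\mathbb{D}^{2,2}$.

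Applying this with $H = L^{-1}G$ gives $E[\langle DF, -DL^{-1}G\rangle_{\HH}] = E[F\, L L^{-1}G] = E[F(G-E[G])] = E[FG] - E[F]E[G]$, which rearranges to the claimed formula. The main obstacle is the convergence bookkeeping needed to pass from the chaos-by-chaos identity to arbitrary $F\in\mathbb{D}^{1,2}$ and $G\in L^2(\Omega)$: one must verify that the series $\sum_p J_pF$ and $\sum_q J_q(L^{-1}G)$ converge in the right norms ($\mathbb{D}^{1,2}$ and $\mathbb{D}^{1,2}$ respectively — the latter being exactly the content of the first paragraph) so that $D$ can be applied term by term and the inner product expanded. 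Everything else is a routine application of the Wiener--It\^o isometry and orthogonality of distinct chaoses.
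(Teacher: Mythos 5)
Your argument is correct, and it is the standard proof of this identity: the paper states Lemma \ref{L : Tech1} without proof (it is a classical fact, see e.g.\ \cite{n-pe-1}), and your route --- checking $L^{-1}G\in\mathrm{Dom}\,L$ from the chaos expansion, establishing the duality $E[\langle DF,-DH\rangle_{\HH}]=E[F\,LH]$ chaos by chaos via the It\^o isometry, and then invoking $LL^{-1}G=G-E[G]$ --- is exactly the canonical one. The sign and combinatorial bookkeeping ($q^2(q-1)!=q\cdot q!$, orthogonality of $DJ_pF$ and $DJ_q H$ for $p\neq q$) all check out, and the convergence issues you flag are indeed the only remaining routine verifications.
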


\subsection{On cumulants}

The notion of cumulant will be crucial throughout the paper. We refer the reader to the monograph \cite{PecTaq} for an exhaustive discussion of such a notion.

\begin{definition}[Cumulants]\label{D : cum}{\rm Let $F$ be a real-valued random variable such that $E|F|^m<\infty$ for some integer
$m\geq 1$, and write $\phi_F(t) = E[e^{itF}]$, $t\in\R$, for the characteristic function of $F$.
Then, for $j=1,...,m$, the $j$th {\it cumulant} of $F$, denoted by $\kappa_j(F)$, is given by
\begin{equation}
\kappa_j (F) = (-i)^j \frac{d^j}{d t^j} \log \phi_F (t)|_{t=0}.
\end{equation}}
\end{definition}

\begin{rmk}{\rm When $E(F)=0$, then the first four cumulants of $F$ are the following: $\kappa_1(F) = E[F]=0$, $\kappa_2(F) = E[F^2]= {\rm Var}(F)$,
$\kappa_3(F) = E[F^3]$, and \[\kappa_4(F) = E[F^4] - 3E[F^2]^2.
\]
}
\end{rmk}
The following standard relation shows that moments can be recursively defined in terms of cumulants (and vice versa): fix $m= 1,2...$, and assume that $ E|F|^{m+1}<\infty$, then
\begin{equation}\label{EQ : RecMom}
 E[F^{m+1}] = \sum_{i=0}^m \binom{m}{i}\kappa_{i+1}(F)  E[F^{m-i}].
\end{equation}

Our aim is now to provide an explicit representation of cumulants in terms of Malliavin operators. To this end, it is convenient to introduce the following definition (see e.g. \cite[Chapter 8]{n-pe-1} for a full multidimensional version).

\begin{definition}\label{Def : Gamma} {\rm Let $F\in \mathbb{D}^{\infty}$. The sequence of random variables $\{\Gamma_i(F)\}_{i\geq 0}\subset
\mathbb{D}^\infty$ is recursively defined as follows. Set $\Gamma_0(F) = F$
and, for every $i\geq 1$, \[\Gamma_{i}(F) = \langle DF,-DL^{-1}\Gamma_{i-1}(F)\rangle_{\HH}.
\]}
\end{definition}

For instance, one has that $\Gamma_1(F) = \langle DF,-DL^{-1}F\rangle_{\HH}$. 
The following statement provides an explicit expression for $\Gamma_s(F)$, $s\geq 1$, when
$F$ has the form of a multiple integral.

\begin{propo}[See e.g. Chapter 8 in \cite{n-pe-1}]
\label{thm-pasmal}
Let $q\geq 2$, and assume that $F=I_q(f)$ with $f\in\HH^{\odot q}$.
Then, for any $i\geq 1$, we have
\begin{equation}
\begin{split}
& \Gamma_{i}(F)  = \\
& \sum_{r_1=1}^{q} \ldots\sum_{r_{i}=1}^{[iq-2r_1-\ldots-2r_{i-1}]\wedge q}c_q(r_1,\ldots,r_{i})
{\bf 1}_{\{r_1< q\}}
\ldots {\bf 1}_{\{
r_1+\ldots+r_{i-1}< \frac{iq}2
\}}\label{for2}\\
& \hskip3cm\times I_{(i+1)q-2r_1-\ldots-2r_{i}}\big(
(...(f\widetilde{\otimes}_{r_1} f) \widetilde{\otimes}_{r_2} f)\ldots
f)\widetilde{\otimes}_{r_{i}}f
\big),\notag
\end{split}
\end{equation}
where the constants $c_q(r_1,\ldots,r_{i-2})$ are recursively defined as follows:
\[
c_q(r)=q(r-1)!\binom{q-1}{r-1}^2,
\]
and, for $a\geq 2$,
\begin{eqnarray*}
&&c_q(r_1,\ldots,r_{a})\\
&& =\! q(r_{a}\!-\!1)! \! \binom{aq-2r_1-\ldots - 2r_{a-1}-1}{r_{a}-1}\!
\binom{q-1}{r_{a}-1}c_q(r_1,\ldots,r_{a-1}).
\end{eqnarray*}
\end{propo}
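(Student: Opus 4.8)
The plan is to establish the formula \eqref{for2} by induction on $i\geq 1$, the engine of each step being the multiplication formula \eqref{multiplication} together with the chaotic expansion of the Malliavin derivative \eqref{dtf} and the action of $L^{-1}$ on a given Wiener chaos.

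First I would settle the base case $i=1$. Here $\Gamma_1(F) = \langle DF, -DL^{-1}F\rangle_\HH$ with $F=I_q(f)$. Since $F$ lives in a single chaos, $-L^{-1}F = q^{-1}F$, so $\Gamma_1(F) = q^{-1}\langle DF, DF\rangle_\HH = q^{-1}\|DI_q(f)\|_\HH^2$. Writing $\HH = L^2(\mu)$ without loss of generality, one has $D_tI_q(f) = qI_{q-1}(f(\cdot,t))$, hence
\[
\Gamma_1(F) = q \int_A I_{q-1}(f(\cdot,t))^2\, d\mu(t).
\]
Expanding the product $I_{q-1}(f(\cdot,t))I_{q-1}(f(\cdot,t))$ via \eqref{multiplication} and then integrating in $t$, a contraction of order $r$ in the product becomes a contraction of order $r+1$ of $f$ with itself after integration; bookkeeping the combinatorial coefficient $r!\binom{q-1}{r}^2$ and shifting $r\mapsto r-1$ yields exactly $c_q(r)=q(r-1)!\binom{q-1}{r-1}^2$ and the stated sum $\sum_{r_1=1}^q c_q(r_1) I_{2q-2r_1}(f\widetilde\otimes_{r_1}f)$, with the term $r_1=q$ (which would be the constant $\|f\|^2$, i.e.\ the ${\bf 1}_{\{r_1<q\}}$-excluded term) correctly separated. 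I would keep careful track of why the indicator ${\bf 1}_{\{r_1<q\}}$ appears rather than being absorbed.

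For the inductive step, assume \eqref{for2} holds for $i-1$, so that $\Gamma_{i-1}(F)$ is an explicit finite sum of multiple integrals $I_{iq-2r_1-\cdots-2r_{i-1}}(g_{r_1,\dots,r_{i-1}})$ where $g_{r_1,\dots,r_{i-1}} = (\cdots((f\widetilde\otimes_{r_1}f)\widetilde\otimes_{r_2}f)\cdots)\widetilde\otimes_{r_{i-1}}f$, weighted by $c_q(r_1,\dots,r_{i-1})$. Then $\Gamma_i(F) = \langle DF, -DL^{-1}\Gamma_{i-1}(F)\rangle_\HH$, and by linearity it suffices to treat one term: for $G=I_p(g)$ a chaos of order $p=iq-2r_1-\cdots-2r_{i-1}$, one has $-L^{-1}G = p^{-1}G$, and
\[
\langle DI_q(f), -DL^{-1}I_p(g)\rangle_\HH = \frac{1}{p}\int_A q\,I_{q-1}(f(\cdot,t))\cdot p\,I_{p-1}(g(\cdot,t))\, d\mu(t) = q\int_A I_{q-1}(f(\cdot,t))I_{p-1}(g(\cdot,t))\, d\mu(t).
\]
Applying \eqref{multiplication} to the product inside and integrating in $t$ converts an order-$r$ contraction into an order-$(r+1)$ contraction $f\otimes_{r+1} g$ of orders ranging over $1\leq r+1\leq (q-1+1)\wedge \cdots$; after symmetrization this produces $I_{(i+1)q-2r_1-\cdots-2r_i}(g\widetilde\otimes_{r_i}f)$ for the new index $r_i$, with the upper limit $r_i\leq [iq-2r_1-\cdots-2r_{i-1}]\wedge q = p\wedge q$ exactly as in \eqref{for2}, and the new combinatorial factor $q(r_i-1)!\binom{p-1}{r_i-1}\binom{q-1}{r_i-1}$ times the old $c_q(r_1,\dots,r_{i-1})$ — which is precisely the recursion defining $c_q(r_1,\dots,r_i)$ once one substitutes $p=iq-2r_1-\cdots-2r_{i-1}$. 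The indicators ${\bf 1}_{\{r_1+\cdots+r_j<jq/2\}}$ appear because each $r_j=jq/2$ corresponds to a contraction collapsing that intermediate integral to the zeroth chaos, which must be tracked and excluded at the level of the nonzero multiple-integral terms.

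The main obstacle I anticipate is the combinatorial bookkeeping: correctly matching the coefficients produced by iterating \eqref{multiplication} — which involve binomial factors depending on the \emph{current} chaos orders $p$ and $q$ — against the recursive definition of $c_q(r_1,\dots,r_a)$, and getting the ranges of summation and the chain of indicator functions exactly right (in particular the passage from ``contraction order in the product'' to ``contraction order of $f$ with the accumulated kernel'' via the $+1$ shift, and the boundary cases where a symmetrized contraction produces a lower-order term or a constant). None of the steps is conceptually deep, but the indices must be handled with care; this is precisely why the statement is quoted from \cite{n-pe-1} rather than reproved in detail here, and a careful reader may wish to verify the cases $i=1,2$ explicitly to gain confidence in the general pattern.
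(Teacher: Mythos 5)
Your induction via the multiplication formula \eqref{multiplication}, the identity $-L^{-1}I_p(g)=p^{-1}I_p(g)$, the derivative formula \eqref{dtf} and stochastic Fubini is exactly the standard argument behind this proposition, which the paper itself does not prove but quotes from Chapter 8 of \cite{n-pe-1}; your outline is correct, including the matching of the new combinatorial factor $q(r_i-1)!\binom{p-1}{r_i-1}\binom{q-1}{r_i-1}$ at $p=iq-2r_1-\cdots-2r_{i-1}$ with the stated recursion for $c_q$, and the range $1\le r_i\le p\wedge q$. The one slip is in your description of the indicators: after $j$ contraction steps the intermediate integral has order $(j+1)q-2(r_1+\cdots+r_j)$, so the degenerate case to exclude is $r_1+\cdots+r_j=(j+1)q/2$ (not $r_j=jq/2$), which is precisely what the chain ${\bf 1}_{\{r_1<q\}}\cdots{\bf 1}_{\{r_1+\cdots+r_{i-1}<iq/2\}}$ encodes, these terms being dropped because $L^{-1}$ annihilates the zeroth chaos.
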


The following statement explicitly connects the expectation of the random variables $\Gamma_i(F)$ to the cumulants of $F$. 

\begin{propo}[See again Chapter 8 in \cite{n-pe-1}] Let $F\in \mathbb{D}^\infty$. Then $F$ has finite moments of every order, and the following relation holds for every $i\geq 0$:
\begin{equation}\label{e:cumgamma}
\kappa_{i+1}(F) = i! E[\Gamma_i(F)].
\end{equation}
\end{propo}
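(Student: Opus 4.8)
The plan is to prove the identity $\kappa_{i+1}(F) = i!\,E[\Gamma_i(F)]$ by induction on $i$, using the integration-by-parts formula of Lemma \ref{L : Tech1} together with the recursion \eqref{EQ : RecMom} linking moments and cumulants. First I would record the base case: for $i=0$, $\Gamma_0(F)=F$ so the claim reads $\kappa_1(F)=E[F]$, which is the definition of the first cumulant; and for $i=1$, applying Lemma \ref{L : Tech1} with $G=F$ gives $E[F^2]=E[F]^2+E[\langle DF,-DL^{-1}F\rangle_\HH]=E[F]^2+E[\Gamma_1(F)]$, i.e. $\kappa_2(F)=\mathrm{Var}(F)=E[\Gamma_1(F)]$, matching the formula with $1!=1$.

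The core of the argument is a recursion for the quantities $E[F^m\,\Gamma_i(F)]$ — or, more conveniently, for $E[\Gamma_i(F)]$ expressed through lower-order data. The key computation is: for $F\in\mathbb{D}^\infty$ (so all the integrations by parts are justified, by Remark \ref{r:density}-type density arguments and the smoothness of $F$), apply Lemma \ref{L : Tech1} with the roles played by $F$ and $G=\Gamma_{i-1}(F)$, possibly after multiplying by a power $F^m$ and using the chain rule \eqref{e:chainrule} to differentiate $F^m\cdot(\text{something})$. Concretely one shows
\begin{equation}
E[F^{m}\Gamma_i(F)] = E\big[F^{m}\,\langle DF, -DL^{-1}\Gamma_{i-1}(F)\rangle_\HH\big],
\end{equation}
and then integrates by parts in the other direction, moving $-DL^{-1}\Gamma_{i-1}(F)$ back onto $F^m$; the chain rule produces a factor $m F^{m-1}DF$, which regenerates $\Gamma_i$ at one lower power of $F$, plus boundary-type terms. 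Iterating this bookkeeping yields a closed relation between $E[\Gamma_i(F)]$, the lower $E[\Gamma_j(F)]$ for $j<i$, and the moments of $F$; comparing with \eqref{EQ : RecMom} and invoking the inductive hypothesis $\kappa_{j+1}(F)=j!\,E[\Gamma_j(F)]$ for all $j<i$ then forces $\kappa_{i+1}(F)=i!\,E[\Gamma_i(F)]$. Equivalently, and perhaps more cleanly, one can introduce the generating-function-level statement: show that $E[\Gamma_i(F)e^{tF}]$ and the successive $t$-derivatives of $\log\phi_F$ satisfy matching recursions, so that $i!\,E[\Gamma_i(F)]$ produces exactly $\kappa_{i+1}(F)$ after evaluating at $t=0$; this is essentially the approach in \cite{n-pe-3} and \cite[Chapter 8]{n-pe-1}, which we may cite.

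Two preliminary points must be dispatched first. One is that $F\in\mathbb{D}^\infty$ has finite moments of every order: this follows from hypercontractivity of the Ornstein–Uhlenbeck semigroup on Wiener chaos, or, in the relevant special case of finite linear combinations of multiple integrals, directly from Remark \ref{r:density} and the fact that each $\mathbb{D}^{1,2}$-element appearing has all $L^p$ norms finite. The second is that all the integrations by parts are legitimate, i.e. that each $\Gamma_{i-1}(F)\in L^2(\Omega)$ and $F^m\in\mathbb{D}^{1,2}$ with the product again in the right space so that Lemma \ref{L : Tech1} applies; this is where membership in $\mathbb{D}^\infty$ (rather than merely $\mathbb{D}^{1,2}$) does the work, since it guarantees $\Gamma_{i-1}(F)\in\mathbb{D}^\infty$ and lets us differentiate arbitrary polynomials in $F$.

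The main obstacle I expect is the combinatorial bookkeeping in the induction step: tracking exactly how the chain rule \eqref{e:chainrule} distributes derivatives over the product $F^m\,\Gamma_{i-1}(F)$ and how the resulting terms reorganize into the binomial sum of \eqref{EQ : RecMom}, so that the factorials line up and $\kappa_{i+1}$ emerges with coefficient precisely $i!$. Getting the recursion into a form that visibly coincides with the moment–cumulant recursion, rather than some equivalent-but-messier identity, is the delicate part; everything else (the base cases, the finiteness of moments, the validity of integration by parts) is routine given the material in Section \ref{gaussian+malliavin}.
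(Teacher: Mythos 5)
The paper offers no proof of this proposition --- it is quoted verbatim from Chapter 8 of \cite{n-pe-1} --- and your sketch reproduces the standard argument of that reference: iterating Lemma \ref{L : Tech1} together with the chain rule \eqref{e:chainrule} gives $E[F^{m}\Gamma_{j}(F)] = E[F^{m}]\,E[\Gamma_{j}(F)] + m\,E[F^{m-1}\Gamma_{j+1}(F)]$, whence $E[F^{m+1}]=\sum_{i=0}^{m}\binom{m}{i}\, i!\,E[\Gamma_i(F)]\,E[F^{m-i}]$, and matching this against \eqref{EQ : RecMom} by induction on $m$ (only the $i=m$ term is new at each stage) yields the claim. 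Two cosmetic remarks: the finiteness of all moments is immediate from the definition of the norms $\Vert\cdot\Vert_{m,p}$ for $F\in\mathbb{D}^{\infty}$, with no need for hypercontractivity, and the integration by parts runs in the direction just displayed (putting the derivative on $F^{m}$ so as to raise the index of $\Gamma$) rather than ``moving $-DL^{-1}\Gamma_{i-1}(F)$ back onto $F^{m}$.''
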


We also use the following result taken from \cite{b-b-n-p} throughout the paper.

\begin{lemma}\label{computation2}
Let $X \in \mathbb{D}^\infty$ Then, the relation
\begin{eqnarray}
 && E(\phi^{(k)}(X) \Gamma_{r}(X))\label{e:gibp}\\
 &&=  E(X \phi^{(k-r)}(X) ) - \sum_{s=1}^{r}  E(\phi^{(k-s)}(X))  E(\Gamma_{r-s}(X))\notag
\end{eqnarray} 
holds for every $k$-times continuously differentiable mapping $\phi: \R \to \R$. 
\end{lemma}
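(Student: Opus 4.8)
The plan is to prove \eqref{e:gibp} by induction on $r\geq 0$. The base case $r=0$ is trivial, since $\Gamma_0(X)=X$ and the right-hand side reduces to $E(X\phi^{(k)}(X))$ with no correction terms, so the identity is an equality of the same quantity. For the inductive step, I would start from the left-hand side $E(\phi^{(k)}(X)\Gamma_r(X))$ and apply the integration by parts formula of Lemma \ref{L : Tech1} with the choices $F=\phi^{(k-1)}(X)\in\mathbb{D}^{1,2}$ and $G=\Gamma_{r-1}(X)\in L^2(\Omega)$ (both memberships are guaranteed since $X\in\mathbb{D}^\infty$, so that $\phi^{(k-1)}(X)$ and all $\Gamma_j(X)$ lie in $\mathbb{D}^\infty$, using Remark \ref{r:density}-type closure properties and the chain rule \eqref{e:chainrule}). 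This gives
\begin{equation*}
E\big(\phi^{(k-1)}(X)\,\Gamma_{r-1}(X)\big) = E\big(\phi^{(k-1)}(X)\big)\,E\big(\Gamma_{r-1}(X)\big) + E\big(\langle D\phi^{(k-1)}(X),\, -DL^{-1}\Gamma_{r-1}(X)\rangle_{\HH}\big).
\end{equation*}

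Next I would rewrite the inner-product term using the chain rule $D\phi^{(k-1)}(X)=\phi^{(k)}(X)\,DX$, so that
\begin{equation*}
\langle D\phi^{(k-1)}(X),\, -DL^{-1}\Gamma_{r-1}(X)\rangle_{\HH} = \phi^{(k)}(X)\,\langle DX,\, -DL^{-1}\Gamma_{r-1}(X)\rangle_{\HH} = \phi^{(k)}(X)\,\Gamma_r(X),
\end{equation*}
by the very definition of $\Gamma_r(X)$ in Definition \ref{Def : Gamma}. Substituting back and solving for $E(\phi^{(k)}(X)\Gamma_r(X))$ yields
\begin{equation*}
E\big(\phi^{(k)}(X)\,\Gamma_r(X)\big) = E\big(\phi^{(k-1)}(X)\,\Gamma_{r-1}(X)\big) - E\big(\phi^{(k-1)}(X)\big)\,E\big(\Gamma_{r-1}(X)\big).
\end{equation*}
Now I would apply the induction hypothesis \eqref{e:gibp} at level $r-1$ (with $\phi^{(k-1)}$ in place of $\phi^{(k)}$, i.e. reading the formula with $k$ replaced by $k-1$) to the first term on the right: $E(\phi^{(k-1)}(X)\Gamma_{r-1}(X)) = E(X\phi^{(k-1-(r-1))}(X)) - \sum_{s=1}^{r-1}E(\phi^{(k-1-s)}(X))E(\Gamma_{r-1-s}(X))$. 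Note $k-1-(r-1)=k-r$, so the leading term is exactly $E(X\phi^{(k-r)}(X))$. Re-indexing the sum $\sum_{s=1}^{r-1}E(\phi^{(k-1-s)}(X))E(\Gamma_{r-1-s}(X))$ via $s'=s+1$ turns it into $\sum_{s'=2}^{r}E(\phi^{(k-s')}(X))E(\Gamma_{r-s'}(X))$; combined with the extra term $-E(\phi^{(k-1)}(X))E(\Gamma_{r-1}(X))$ (which is the $s'=1$ contribution), this assembles precisely $-\sum_{s=1}^{r}E(\phi^{(k-s)}(X))E(\Gamma_{r-s}(X))$, completing the step.

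The only genuinely delicate point — and the one I would treat with the most care — is the justification that $F=\phi^{(k-1)}(X)$ and $G=\Gamma_{r-1}(X)$ satisfy the hypotheses of Lemma \ref{L : Tech1} and that the chain rule \eqref{e:chainrule} applies. Strictly, \eqref{e:chainrule} is stated for $\varphi$ with bounded derivatives, whereas $\phi$ here is only assumed $k$-times continuously differentiable; the standard remedy is a truncation/approximation argument (approximate $\phi^{(k-1)}$ by smooth functions with bounded derivatives, use that $X$ and hence $\phi^{(j)}(X)$ have moments of all orders since $X\in\mathbb{D}^\infty$, and pass to the limit), together with the observation that $\Gamma_j(X)\in\mathbb{D}^\infty$ for all $j$ by iterated application of the Malliavin calculus rules. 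Everything else is bookkeeping with the recursion, and the induction closes cleanly.
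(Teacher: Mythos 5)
Your argument is correct, and it is in fact the standard proof of this identity: the paper itself gives no proof but imports the lemma from \cite{b-b-n-p}, where it is established by exactly your induction on $r$, namely applying Lemma \ref{L : Tech1} to $F=\phi^{(k-1)}(X)$ and $G=\Gamma_{r-1}(X)$, using the chain rule to identify $\langle D\phi^{(k-1)}(X),-DL^{-1}\Gamma_{r-1}(X)\rangle_{\HH}$ with $\phi^{(k)}(X)\Gamma_r(X)$, and then invoking the inductive hypothesis with $k$ replaced by $k-1$. Your re-indexing of the sum closes the recursion correctly, and your closing remark rightly identifies the only delicate point (the implicit integrability/boundedness needed to legitimize the chain rule and the expectations, handled by truncation since $X\in\mathbb{D}^\infty$ has moments of all orders).
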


The next section is devoted to the elements of the second Wiener chaos.

\subsection{Some relevant properties of the second Wiener chaos}\label{ss:swc}

In this subsection, we gather together some properties the elements of the second Wiener chaos of the isonormal process $W=\{W(h); \ h \in \HH\}$; recall the these are random variables having the general form $F=I_2(f)$, with 
$f \in  \HH^{\odot 2}$. Notice that, if $f=h\otimes h$, where $h \in \HH$ is such that $\Vert h \Vert_{\HH}=1$, then using the multiplication formula (\ref{multiplication}), one has 
$I_2(f)=W(h)^2 -1  \stackrel{\text{law}}{=} N^2 -1$, where $N \sim \mathcal{N}(0,1)$. To any kernel $f \in \HH^{\odot 2}$, we associate the following \textit{Hilbert-Schmidt} operator
\begin{equation*}
A_f : \HH \mapsto \HH; \quad g \mapsto f\otimes_1 g. 
\end{equation*}
It is also convenient to introduce the sequence of auxiliary kernels
\begin{equation}
\left\{ f\otimes _{1}^{\left( p\right) }f:p\geq 1\right\} \subset \mathfrak{H%
}^{\odot 2}  \label{kern1}
\end{equation}%
defined as follows: $f\otimes _{1}^{\left( 1\right) }f=f$, and, for $p\geq 2$,%
\begin{equation}
f\otimes _{1}^{\left( p\right) }f=\left( f\otimes _{1}^{\left(
p-1\right) }f\right) \otimes _{1}f\text{.}  \label{kern2}
\end{equation}
In particular,
$f\otimes _{1}^{\left( 2\right) }f=f\otimes _1 f$. Finally, we write $\{\alpha_{f,j}\}_{j \ge 1}$ and $\{e_{f,j}\}_{j \ge 1}$, respectively, to indicate the (not necessarily distinct) eigenvalues of $A_f$ and the corresponding eigenvectors.

\begin{propo}[See e.g. Section 2.7.4 in \cite{n-pe-1}] \label{second-property}
Fix $F=I_2(f)$ with $f \in \HH^{\odot 2}$.
\begin{enumerate}
 \item The following equality holds: $F=\sum_{j\ge 1} \alpha_{f,j} \big( N^2_j -1 \big)$, where $\{N_j\}_{j \ge 1}$ is a sequence of i.i.d. $\mathcal{N}(0,1)$ random variables that are elements of the isonormal process $W$, and the series converges in $L^2$ and almost surely.
 
 \item For any $i \ge 2$,
 \begin{equation*}
  \kappa_i(F)= 2^{i-1}(i-1)! \sum_{j \ge 1} \alpha_{f,j}^i = 2^{i-1}(i-1)! \times \langle f \otimes^{(i-1)}_{1}f ,f \rangle_{\HH^{\otimes 2}}.
 \end{equation*}
\item The law of the random variable $F$ is completely determined by its moments or equivalently by its cumulants.
\end{enumerate}

\end{propo}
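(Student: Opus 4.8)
The plan is to prove the three claims of Proposition \ref{second-property} by exploiting the eigenstructure of the Hilbert--Schmidt operator $A_f$ together with the basic isometry properties of multiple integrals. First I would recall that, since $f\in\HH^{\odot 2}$, the operator $A_f\colon g\mapsto f\otimes_1 g$ is a self-adjoint Hilbert--Schmidt operator on $\HH$; by the spectral theorem it admits a countable system of real eigenvalues $\{\alpha_{f,j}\}_{j\ge1}$ with $\sum_j \alpha_{f,j}^2<\infty$ and an orthonormal system of eigenvectors $\{e_{f,j}\}_{j\ge1}$ such that $f=\sum_{j\ge1}\alpha_{f,j}\, e_{f,j}\otimes e_{f,j}$, the series converging in $\HH^{\otimes 2}$. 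Applying the isometry $I_2$ and using linearity together with the identity $I_2(e_{f,j}\otimes e_{f,j})=W(e_{f,j})^2-1$ (obtained from the multiplication formula \eqref{multiplication}, exactly as in the remark preceding the statement), one gets $F=\sum_{j\ge1}\alpha_{f,j}(N_j^2-1)$ with $N_j:=W(e_{f,j})$. These $N_j$ are jointly Gaussian, centered, with $E[N_jN_\ell]=\langle e_{f,j},e_{f,\ell}\rangle_\HH=\delta_{j\ell}$, hence i.i.d.\ standard normal; the $L^2$-convergence is immediate from the isometry, and almost sure convergence follows because the partial sums form an $L^2$-bounded martingale (with respect to the filtration generated by $N_1,N_2,\dots$), so the martingale convergence theorem applies. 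This settles part 1.

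For part 2, I would compute the cumulants of $F$ using the representation from part 1 together with independence: since the $N_j$ are independent, $\kappa_i(F)=\sum_{j\ge1}\alpha_{f,j}^i\,\kappa_i(N_j^2-1)$, and $\kappa_i(N^2-1)=\kappa_i(N^2)$ for $i\ge2$. The cumulants of a $\chi^2_1$ variable are classical: $\kappa_i(N^2)=2^{i-1}(i-1)!$ for $i\ge1$ (this can be read off from the cumulant generating function $\log E[e^{t(N^2-1)}]=-t-\tfrac12\log(1-2t)=\sum_{i\ge2}\frac{(2t)^i}{2i}$, whence $\kappa_i=\frac{(i-1)!\,2^i}{2}=2^{i-1}(i-1)!$). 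This gives $\kappa_i(F)=2^{i-1}(i-1)!\sum_{j\ge1}\alpha_{f,j}^i$. It then remains to identify $\sum_{j\ge1}\alpha_{f,j}^i$ with the iterated contraction $\langle f\otimes_1^{(i-1)}f,\,f\rangle_{\HH^{\otimes2}}$: using $f=\sum_j\alpha_{f,j}\,e_{f,j}\otimes e_{f,j}$ and the orthonormality of the $e_{f,j}$, one checks inductively on $p$ that $f\otimes_1^{(p)}f=\sum_j\alpha_{f,j}^{p}\,e_{f,j}\otimes e_{f,j}$ (the contraction $\otimes_1$ acts on the eigenbasis as multiplication of eigenvalues, since $(e_{f,j}\otimes e_{f,j})\otimes_1(e_{f,k}\otimes e_{f,k})=\delta_{jk}\,e_{f,j}\otimes e_{f,j}$), and then pairing with $f$ in $\HH^{\otimes2}$ yields $\sum_j\alpha_{f,j}^{p+1}$. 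Taking $p=i-1$ gives the claimed identity.

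For part 3, the point is to show that the moment sequence of $F$ does not grow too fast, so that the classical moment determinacy criterion (e.g.\ Carleman's condition, or the simpler criterion that the moment generating function is finite in a neighborhood of the origin) applies. The cleanest route is to observe that $F$ lives in the second Wiener chaos, hence enjoys hypercontractivity: $E[|F|^p]^{1/p}\le (p-1)\,E[F^2]^{1/2}$ for all $p\ge2$, so the moments grow at most like $C^p\,p^p$ times a constant; equivalently, since $F$ is a (possibly infinite, $L^2$-convergent) linear combination of independent shifted $\chi^2_1$ variables, its moment generating function $E[e^{tF}]$ is finite for $|t|$ small enough (one can bound it by $\prod_j E[e^{|t|\alpha_{f,j}(N_j^2-1)}]$, which converges provided $2|t|\sup_j|\alpha_{f,j}|<1$, using $\sum_j\alpha_{f,j}^2<\infty$ to control the tail). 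Either way, the law of $F$ is determined by its moments, and since moments and cumulants determine each other via \eqref{EQ : RecMom}, also by its cumulants. The main obstacle, and the only genuinely delicate point, is the almost sure convergence in part 1 and the careful bookkeeping in the inductive identification of $f\otimes_1^{(p)}f$ in part 2; everything else is a direct application of the spectral theorem, the Wiener--It\^o isometry, and standard facts about $\chi^2$ distributions.
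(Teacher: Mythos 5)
Your argument is correct, but note that the paper itself offers no proof of this proposition: it is quoted as a known result from Section 2.7.4 of \cite{n-pe-1}, and your write-up is essentially the standard proof given there (spectral decomposition of $A_f$, the identity $I_2(e\otimes e)=W(e)^2-1$, additivity of cumulants of independent summands, and moment determinacy via hypercontractivity or finiteness of the moment generating function near the origin). The only step deserving one more line is the passage from finite to infinite sums in part 2: the identity $\kappa_i(F)=\sum_j\alpha_{f,j}^i\,\kappa_i(N_j^2-1)$ for an infinite series requires that the partial sums converge to $F$ in every $L^p$ (so that all moments, hence all cumulants, pass to the limit); this follows from the hypercontractivity of the second chaos that you invoke only in part 3, so you should cite it already at that point. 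With that remark incorporated, the proof is complete.
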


\section{Main results}

Throughout this section, we assume that $\{ W(h):\ h \in \HH\}$ is a centered isonormal Gaussian process on a separable Hilbert space $\HH$ having $\{e_i\}_{i \ge 1}$ as a complete orthonormal basis.

\subsection{A new view of reference \cite{n-po-1}}\label{ss:np}
We now fix a symmetric kernel $f_\infty \in \HH^{\odot 2}$ such that the its corresponding Hilbert-Schmidt operator $A_{f_\infty}$ (see Section \ref{ss:swc}) has a finite number of non-zero eigenvalues, that we denote by $\{ \alpha_i\}_{i=1}^{k}$. To simplify the discussion, we assume that the eigenvalues are all distinct. As anticipated, we want to study convergence in distribution towards the random variable

\begin{equation}\label{target-wiener}
  F_{\infty}:= I_2(f_\infty)= \sum_{i=1}^k \alpha_i \left(N_i^2-1\right),
\end{equation}
where $\{N_i\}_{i=1}^{k}$ is the family of i.i.d. $\mathcal{N}(0,1)$ random variables appearing at Point 1 of Proposition \ref{second-property}. Following Nourdin and Poly \cite{n-po-1}, we define the two crucial polynomials $P$ and $Q$ as follows:
\begin{equation}\label{polynomialP}
 Q(x)=\big( P(x)\big)^{2}=\Big(x \prod_{i=1}^{k}(x - \alpha_i) \Big)^{2}.
\end{equation}
Note that, by definition, the roots of $Q$ and $P$ correspond with the set $\{0, \alpha_1,...,\alpha_k\}$.

The starting point of our discussion is the following result, proved in \cite{n-po-1}: it provides necessary and sufficient conditions for a sequence in the second Wiener chaos of $W$ to converge in distribution towards $F_\infty$.

\begin{thm}[See \cite{n-po-1}]\label{t:noup}
 Consider a sequence $\{ F_n \}_{n \ge 1} = \{ I_2 (f_n)\}_{n \ge 1}$ of double Wiener integrals with $f_n \in \HH^{\odot 2}$. Then, the following statements are equivalent, as $n\to \infty$:
\begin{description}
 \item[(i)] 
 $F_n \stackrel{{\rm law}}{\to} F_\infty$;

\item[(ii)]
the following two asymptotic relations are verified:
\begin{enumerate}
 \item $\kappa_r (F_n) \to \kappa_r (F_\infty), \quad \text{for all} \ 2\le r \le k+1=\text{deg}(P)$,
 \item $ \sum_{r=2}^{\text{deg}(Q)} \frac{Q^{(r)}(0)}{r!} \frac{\kappa_{r}(F_n)}{2^{r-1}(r-1)!} \to 0$.
 \end{enumerate}
\end{description}
\end{thm}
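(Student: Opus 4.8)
The plan is to reduce everything to a statement about moments/cumulants, using the fact (Point 3 of Proposition \ref{second-property}) that the law of $F_\infty$ is determined by its cumulants, together with the explicit formula $\kappa_i(F) = 2^{i-1}(i-1)!\sum_j \alpha_{f,j}^i$ from Point 2. For the implication (i) $\Rightarrow$ (ii), I would first argue that convergence in distribution of second-chaos elements, combined with hypercontractivity (all $L^p$ norms on a fixed chaos are equivalent), forces convergence of all moments and hence of all cumulants; in particular $\kappa_r(F_n)\to\kappa_r(F_\infty)$ for every $r\ge 2$, which immediately gives item (ii)(1). For item (ii)(2), the key observation is that the limit $F_\infty$ has the special property that its ``spectral measure'' $\sum_{i=1}^k\delta_{\alpha_i}$ (weighted appropriately) is supported on the roots of $P$, equivalently on the zero set of $Q$. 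Since $\kappa_r(F_\infty)/(2^{r-1}(r-1)!)=\sum_{i=1}^k\alpha_i^r$, one has for the limit
\begin{equation*}
\sum_{r=2}^{\deg Q}\frac{Q^{(r)}(0)}{r!}\frac{\kappa_r(F_\infty)}{2^{r-1}(r-1)!} = \sum_{i=1}^k\Big(\sum_{r=2}^{\deg Q}\frac{Q^{(r)}(0)}{r!}\alpha_i^r\Big) = \sum_{i=1}^k\big(Q(\alpha_i)-Q(0)-Q'(0)\alpha_i\big) = 0,
\end{equation*}
using the Taylor expansion of $Q$ at $0$ and the facts $Q(0)=0$, $Q'(0)=0$ (since $0$ is a double root of $Q=P^2$), and $Q(\alpha_i)=0$. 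Passing to the limit along $F_n$ via the convergence of each cumulant then yields (ii)(2).

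The substantive direction is (ii) $\Rightarrow$ (i). Here I would exploit the combinatorial structure of the second chaos: writing $F_n = I_2(f_n) = \sum_j \alpha_{f_n,j}(N_j^2-1)$, one has $\kappa_r(F_n) = 2^{r-1}(r-1)!\,S_r(n)$ where $S_r(n):=\sum_j\alpha_{f_n,j}^r$ is the $r$th power sum of the eigenvalues. The first step is to show that hypothesis (ii) controls \emph{all} power sums, not just those with $r\le\deg Q$. Rewriting condition (ii)(2) as $\sum_j Q(\alpha_{f_n,j}) \to 0$ (again using $Q(0)=Q'(0)=0$ to absorb the missing low-order terms, and noting $\sum_j\alpha_{f_n,j}^r$ is finite for $r\ge 2$ by square-integrability), and observing that $Q=P^2\ge 0$, we get $\sum_j Q(\alpha_{f_n,j})\to 0$; since $Q$ is a nonnegative polynomial vanishing exactly on $\{0,\alpha_1,\dots,\alpha_k\}$, this says the eigenvalues of $f_n$ asymptotically concentrate near that finite set. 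Combined with (ii)(1), which pins down how much mass accumulates near each $\alpha_i$ (the first $k+1$ power sums determine a measure supported on $k$ points plus a possible atom at $0$), a compactness/uniform-integrability argument then shows that the empirical eigenvalue data of $F_n$ converges to that of $F_\infty$, hence every power sum $S_r(n)\to\sum_{i=1}^k\alpha_i^r$, hence every cumulant converges. By Proposition \ref{second-property}(3) and the method of moments (moments are determined by cumulants via \eqref{EQ : RecMom}, and one needs a growth bound on the moments of $F_\infty$, which holds since it is a finite combination of chi-squares), this gives $F_n\stackrel{\text{law}}{\to}F_\infty$.

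The main obstacle I anticipate is the passage ``$\sum_j Q(\alpha_{f_n,j})\to 0$ and finitely many low cumulants converge'' $\Rightarrow$ ``all power sums converge.'' One has to rule out mass escaping to infinity or splitting among spurious near-roots: the nonnegativity and the exact zero set of $Q$ give concentration near $\{0,\alpha_1,\dots,\alpha_k\}$, but turning this into convergence of \emph{high} power sums requires controlling the total mass $\sum_j\alpha_{f_n,j}^2=\tfrac12\kappa_2(F_n)$ (which converges by (ii)(1), so it is bounded) and then a careful localisation near each root showing that the amount of eigenvalue mass in a shrinking neighbourhood of $\alpha_i$ tends to the right limit. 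This is exactly where the quantitative strength of having $Q=P^2$ vanish to order two at $0$ and to order two at each $\alpha_i$ — rather than merely to order one — is used, since it gives the rate at which stray mass is penalised. Once that localisation is in hand, the remaining steps are routine bookkeeping with power sums and cumulants.
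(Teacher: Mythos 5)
Your proposal is correct, but it follows a genuinely different route from the ones the paper relies on. The paper does not reprove this statement: it imports it from Nourdin--Poly, whose original argument is complex-analytic (it writes the characteristic function of $I_2(f_n)$ explicitly as a product over the eigenvalues of $A_{f_n}$ and works with that analytic expression), and the paper's own contribution (Lemma \ref{lemma1}, Theorem \ref{main-general-thm} and Remark \ref{r:r}) is an alternative proof of the implication \textbf{(ii)} $\Rightarrow$ \textbf{(i)} only, obtained by rewriting $\sum_j Q(\alpha_{f_n,j})$ as $\tfrac12 E\bigl(\sum_r \frac{P^{(r)}(0)}{r!\,2^{r-1}}(\Gamma_{r-1}(F_n)-E\Gamma_{r-1}(F_n))\bigr)^2$ and invoking the Malliavin-operator criterion. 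Your argument instead stays entirely at the level of the spectral data: you encode the eigenvalues in the finite positive measures $\nu_n=\sum_j\alpha_{f_n,j}^2\,\delta_{\alpha_{f_n,j}}$, use $\sum_jQ(\alpha_{f_n,j})=\int\prod_i(x-\alpha_i)^2\,d\nu_n\to0$ to force concentration of $\nu_n$ on $\{\alpha_1,\dots,\alpha_k\}$ (the uniform support bound $|\alpha_{f_n,j}|\le\sqrt{S_2(n)}$ and the convergent total mass $\nu_n(\R)=S_2(n)$ give the compactness needed to extract weak-$*$ limits), and then the $k$ relations coming from (ii)(1), together with the invertibility of the Vandermonde matrix built on the pairwise distinct $\alpha_i$, identify every subsequential limit as $\sum_i\alpha_i^2\delta_{\alpha_i}$; convergence of all power sums, hence of all cumulants and moments, follows, and Point 3 of Proposition \ref{second-property} plus the moment recursion \eqref{EQ : RecMom} closes the argument. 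This is more elementary than either existing proof (no complex analysis, no $\Gamma$-operators), but, unlike Theorem \ref{main-general-thm}, it is confined to the second chaos, where the law is a deterministic function of the eigenvalue multiset. Two details you should make explicit when writing it up: in \textbf{(i)} $\Rightarrow$ \textbf{(ii)} the bound $\sup_nE[F_n^2]<\infty$ is not a consequence of convergence in law alone and must be supplied by the hypercontractivity statement of Lemma \ref{hyper}; and in the localisation step it is precisely the weighting by $\alpha^2$ that disposes of the ``possible atom at $0$'' you mention, since $R(x)=\prod_i(x-\alpha_i)^2$ is bounded below on any closed set disjoint from $\{\alpha_1,\dots,\alpha_k\}$ (in particular near $0$), so $\nu_n$ puts asymptotically no mass there.
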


\medskip

The original proof of Theorem \ref{t:noup} is based on methods from complex analysis, and exploits the fact that (owing to the representation stated at Point 1 of Proposition \ref{second-property}) the 
Fourier transform of a random variable with the form $I_2(f)$ can be written down explicitly in terms of the eigenvalues $\{\alpha_{f,j}\}$. Our aim in this section is to prove that condition {\bf (ii)} of 
Theorem \ref{t:noup} can be equivalently stated in terms of contractions and Malliavin operators. Such equivalent conditions naturally lead to the main findings of the paper, as stated in Theorem 
\ref{main-general-thm}, that will also provide (as a by-product) an alternate proof of Theorem \ref{t:noup} that does not make use of complex analysis (see, in particular, Remark \ref{r:r} below). We start 
with a crucial lemma, that is in some sense the linchpin of the whole paper.

\begin{lemma}\label{lemma1}
 Let $F=I_{2}(f)$, $f \in \HH^{ \odot 2}$, be a generic element of the second Wiener chaos of $W$, and write $\{\alpha_{f,j}\}_{j\geq 1}$ for the set of the eigenvalues of the associated Hilbert-Schmidt operator $A_f$ we have
 \begin{eqnarray}
 && \sum_{r=2}^{\text{deg}(Q)} \frac{Q^{(r)}(0)}{r!} \frac{\kappa_{r}(F)}{2^{r-1}(r-1)!} \notag\\
 &&= \sum_{j\geq 1} Q(\alpha_{f,j})\label{e:2}\\
 && = \Bigg\Vert \sum_{r=1}^{\text{deg}(P)} \frac{P^{(r)}(0)}{r!} f \otimes^{(r)}_{1} f \Bigg\Vert^{2}_{\HH^{ \otimes 2}} \label{e:3}\\
 && = \frac12E \Bigg( \sum_{r=1}^{\text{deg}(P)} \frac{P^{(r)}(0)}{r! \ 2^{r-1}} \Big( \Gamma_{r-1}(F) -   E(\Gamma_{r-1}(F)) \Big) \Bigg)^2,\label{e:4}
 \end{eqnarray}
 where the operators $\Gamma_{r}(\cdot)$ have been introduced in Definition \ref{Def : Gamma}. In particular, for the target random variable $F_\infty$ introduced at \eqref{target-wiener} one has that
\begin{eqnarray}
 && 0 =\sum_{r=2}^{\text{deg}(Q)} \frac{Q^{(r)}(0)}{r!} \frac{\kappa_{r}(F_\infty)}{2^{r-1}(r-1)!} \notag\\
&& = \frac12E \Bigg( \sum_{r=1}^{\text{deg}(P)} \frac{P^{(r)}(0)}{r! \ 2^{r-1}} \Big( \Gamma_{r-1}(F_\infty) -   E(\Gamma_{r-1}(F_\infty)) \Big) \Bigg)^2.\label{e:t0}
 \end{eqnarray}
\end{lemma}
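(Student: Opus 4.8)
The plan is to establish the chain of equalities \eqref{e:2}--\eqref{e:4} one link at a time, since each link involves a different tool (spectral calculus, Hilbert-space geometry, Malliavin operators), and then deduce the final display \eqref{e:t0} by specialisation.

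\textbf{Step 1: from cumulants to the spectral sum \eqref{e:2}.} I would start from Point 2 of Proposition \ref{second-property}, which gives $\kappa_r(F) = 2^{r-1}(r-1)! \sum_{j\ge 1}\alpha_{f,j}^r$ for $r\ge 2$. Substituting this into the left-hand side of \eqref{e:2} collapses the combinatorial prefactor: the $r$th summand becomes $\frac{Q^{(r)}(0)}{r!}\sum_{j\ge 1}\alpha_{f,j}^r$. Swapping the (finite, since $Q$ is a polynomial) sum over $r$ with the sum over $j$, the inner sum $\sum_{r} \frac{Q^{(r)}(0)}{r!}\alpha_{f,j}^r$ is just the Taylor expansion of $Q$ at $0$ evaluated at $\alpha_{f,j}$; the terms $r=0$ and $r=1$ are absent, but $Q(0)=0$ and $Q'(0)=0$ because $0$ is a double root of $Q=P^2$, so adding them back is harmless and we recover exactly $\sum_{j\ge 1}Q(\alpha_{f,j})$. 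One must note that only finitely many $\alpha_{f,j}$ are nonzero (or at least the series converges absolutely — this follows from $F\in L^2$ and Point 2 with $r=2$), so all interchanges are legitimate.

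\textbf{Step 2: from the spectral sum to the contraction norm \eqref{e:3}.} Here the key fact is that the iterated contractions $f\otimes_1^{(r)}f$ are diagonalised by the same eigenbasis $\{e_{f,j}\}$ as $A_f$, with eigenvalue $\alpha_{f,j}^{r}$ in the appropriate sense; concretely, writing $f=\sum_j \alpha_{f,j}\, e_{f,j}\otimes e_{f,j}$ one checks by induction that $f\otimes_1^{(r)}f = \sum_j \alpha_{f,j}^{r}\, e_{f,j}\otimes e_{f,j}$, and these are orthogonal in $\HH^{\otimes 2}$ with $\|e_{f,j}\otimes e_{f,j}\|^2=1$. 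Therefore $\bigl\|\sum_{r=1}^{\deg P}\frac{P^{(r)}(0)}{r!} f\otimes_1^{(r)}f\bigr\|^2 = \sum_j \bigl(\sum_r \frac{P^{(r)}(0)}{r!}\alpha_{f,j}^r\bigr)^2 = \sum_j P(\alpha_{f,j})^2$ (again using $P(0)=0$ to account for the missing $r=0$ term), and $P^2=Q$ gives \eqref{e:3}. One should double check the case of repeated eigenvalues — one works with the orthonormal eigenvectors and the argument is unaffected.

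\textbf{Step 3: from the contraction norm to the Malliavin expression \eqref{e:4}, and the conclusion.} The link is Proposition \ref{thm-pasmal} specialised to $q=2$: when $F=I_2(f)$, the formula for $\Gamma_{r-1}(F)$ simplifies drastically because the only surviving contraction index is $r_1=\cdots=r_{r-1}=1$ (any index equal to $2$ forces the indicator to vanish or contracts the integral down), leaving $\Gamma_{r-1}(F) = c + 2^{r-1}(r-1)!\, I_2\!\bigl(f\otimes_1^{(r)}f\bigr)$ for an explicit constant $c = E[\Gamma_{r-1}(F)]$; I would record this identity carefully as it is where the factor $2^{r-1}$ in \eqref{e:4} comes from. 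Then $\Gamma_{r-1}(F)-E[\Gamma_{r-1}(F)] = 2^{r-1}(r-1)!\,I_2(f\otimes_1^{(r)}f)$, the linear combination $\sum_r \frac{P^{(r)}(0)}{r!\,2^{r-1}}(\Gamma_{r-1}(F)-E[\Gamma_{r-1}(F)])$ equals $I_2\bigl(\sum_r \frac{P^{(r)}(0)}{r!}f\otimes_1^{(r)}f\bigr)$, and the isometry property $E[I_2(g)^2]=2\|g\|^2_{\HH^{\otimes 2}}$ for symmetric $g$ (note the contractions $f\otimes_1^{(r)}f$ are symmetric since $A_f$ is self-adjoint) converts the second moment into twice the norm in \eqref{e:3}, the factor $2$ cancelling against the $\frac12$. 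Finally, \eqref{e:t0} is immediate: for $F_\infty$ the nonzero eigenvalues are exactly $\{\alpha_1,\dots,\alpha_k\}$, all of which are roots of $P$, hence of $Q$, so $\sum_j Q(\alpha_{f_\infty,j}) = \sum_{i=1}^k Q(\alpha_i) = 0$, and the chain \eqref{e:2}--\eqref{e:4} turns this zero into the vanishing of the $L^2$-expression on the right of \eqref{e:t0}. The main obstacle I anticipate is Step 3: extracting from the heavy general formula in Proposition \ref{thm-pasmal} the clean statement that for $q=2$ only the all-ones multi-index contributes, and tracking the constants $c_2(1,\dots,1)=2^{r-1}(r-1)!$ exactly, so that the normalising factors in \eqref{e:4} come out right.
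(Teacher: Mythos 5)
Your Steps 1 and 2 coincide with the paper's argument: the spectral form of the cumulants from Point 2 of Proposition \ref{second-property}, then the diagonalisation $f\otimes_1^{(r)}f=\sum_j\alpha_{f,j}^r\,e_{f,j}\otimes e_{f,j}$ together with orthonormality. Step 3 is where you genuinely diverge. The paper does not invoke Proposition \ref{thm-pasmal} at all; it proves the key identity $I_2(f\otimes_1^{(r)}f)=2^{-(r-1)}\{\Gamma_{r-1}(F)-E[\Gamma_{r-1}(F)]\}$ directly by induction on $r$, computing $\Gamma_r(F)=\langle DF,-DL^{-1}\Gamma_{r-1}(F)\rangle_{\HH}$ with the multiplication formula and a stochastic Fubini argument, which yields $\Gamma_r(F)=2^r\langle f,f\otimes_1^{(r)}f\rangle_{\HH^{\otimes 2}}+2^rI_2(f\otimes_1^{(r+1)}f)$ in one line and avoids the combinatorics of the general $\Gamma_i$ formula. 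Your route --- specialising Proposition \ref{thm-pasmal} to $q=2$ and observing that the indicators force $r_1=\cdots=r_{i-1}=1$ while $r_i\in\{1,2\}$ produces respectively the $I_2$ term and the constant --- is perfectly viable and gives the same identity, at the cost of unwinding the recursion for the coefficients. On that point there is a slip you should fix: the recursion gives $c_2(\underbrace{1,\ldots,1}_{r-1})=2^{r-1}$, not $2^{r-1}(r-1)!$ as you wrote, since each step of the recursion contributes a factor $2\cdot 0!\binom{\cdot}{0}\binom{1}{0}=2$. Note that your own downstream algebra --- cancelling $\tfrac{P^{(r)}(0)}{r!\,2^{r-1}}$ against the constant to obtain $I_2\bigl(\sum_r\tfrac{P^{(r)}(0)}{r!}f\otimes_1^{(r)}f\bigr)$ and then applying $E[I_2(g)^2]=2\|g\|^2_{\HH^{\otimes2}}$ --- is only consistent with the correct value $2^{r-1}$, so once the constant is corrected the argument closes. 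Everything else (the $Q(0)=Q'(0)=0$ and $P(0)=0$ bookkeeping, the symmetry of $f\otimes_1^{(r)}f$, and the specialisation to $F_\infty$ via the roots of $Q$) matches the paper.
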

\begin{proof}
In view of the second equality at Point 2 of Proposition \ref{second-property}, one has that $\frac{\kappa_{r}(F)}{2^{r-1}(r-1)!}  = \sum_{j\geq 1} \alpha_{f,j}^r$, from which we deduce immediately \eqref{e:2}. To prove \eqref{e:3}, observe that Point 1 of Proposition \ref{second-property}, together with the product formula \eqref{multiplication}, implies that the kernel $f$ admits a representation of the type $f = \sum_{j\geq 1} \alpha_{f,j} \eta_j \otimes \eta_j$, where $\{\eta_j\}$ is some orthonormal system in $\HH$. It follows that, for $r\geq 1$, one has the representation $f\otimes_1^{(r)} f = \alpha^r_{f,j} \eta_j \otimes \eta_j$, and therefore 
\begin{eqnarray*}
&& \sum_{r=1}^{\text{deg}(P)} \frac{P^{(r)}(0)}{r!} f \otimes^{(r)}_{1} f = \sum_{j\geq 1} \eta_j \otimes \eta_j  \sum_{r=1}^{\text{deg}(P)} \frac{P^{(r)}(0)}{r!}\alpha^r_{f,j}.
\end{eqnarray*}
Taking norms on both sides of the previous relation and exploiting the orthonormality of the $\eta_j$ yields \eqref{e:3}. Finally, in order to show \eqref{e:4}, it is clearly enough to prove that, for any $r \ge 1$,
\begin{equation}\label{Gamma-indentity}
I_2(f \otimes^{(r)}_{1} f) = \frac{1}{2^{r-1}}\big\{ \Gamma_{r-1}(F) -  E(\Gamma_{r-1}(F)) \big\}.
\end{equation}
We proceed by induction on $r$. It is clear for $r=1$, because $\Gamma_0(F)=F$ and $ E(F)=0$. Take $r \ge 2$ and assume that $(\ref{Gamma-indentity})$ holds true. Without loss of generality, we can assume that 
$\HH=L^2 (A,\mathcal{A},\mu)$, where $\mu$ is a $\sigma$-finite and non-atomic measure on the measurable space $(A,\mathcal{A})$. Notice that, by definition of $\Gamma_r(F)$ and the induction assumption, one has
\begin{equation*}
 \begin{split}
&  \Gamma_r(F)\\
&= \langle DF,-D\LL^{-1}\Gamma_{r-1}(F)\rangle_{\HH}=\left \langle 2 I_1(f(t,.)),2^{r-1} I_1(f \otimes^{(r)}_{1} f(t,.))\right\rangle_{\HH}\\
 &= 2^r \int_{A} \Big\{ \langle f(t,.),f \otimes^{(r)}_{1} f(t,.)\rangle_{\HH} + I_2\big(f(t,.)\otimes (f \otimes^{(r)}_{1} f)(t,.)\big) \Big\} \ud\mu(t)\\
 &= 2^r \langle f, f \otimes^{(r)}_{1} f\rangle_{\HH^{\odot 2}} + 2^r I_2(f \otimes^{(r+1)}_{1} f),
 \end{split}
\end{equation*}
where we have used a standard stochastic Fubini Theorem. This proves that \eqref{Gamma-indentity} is verified for every $r\geq 1$. The last assertion in the statement follows from \eqref{e:2}, as well as the fact that the eigenvalues $\alpha_i$ are all roots of $Q$.
\end{proof}

The next proposition, which is an immediate consequence of Lemma \ref{lemma1}, provides the announced extension of Theorem \ref{t:noup}.

\begin{propo}\label{p:z}
Assume $\{ F_n \}_{n \ge 1} = \{ I_2 (f_n)\}_{n \ge 1}$ be a sequence of double Wiener integrals with $f_n \in \HH^{\odot 2}$. Then the following statements are equivalent to either Point {\bf (i)} or {\bf (ii)} of Theorem \ref{t:noup}, as $n\to\infty$.
\begin{description}
 \item[(a)] The following relations 1.-2. are in order: 
 \begin{enumerate}
 \item $\kappa_r (F_n) \to \kappa_r (F_\infty), \quad \text{for all} \ 2\le r \le k+1=\text{deg}(P)$, and
\item $  E \Bigg( \sum_{r=1}^{k+1} \frac{P^{(r)}(0)}{r! \ 2^{r-1}} \Big( \Gamma_{r-1}(F) -   E(\Gamma_{r-1}(F)) \Big) \Bigg)^2 \to 0$.
\end{enumerate}
\item[(b)] The following relations 1.-2. are in order:
\begin{enumerate}
 \item $\kappa_r (F_n) \to \kappa_r (F_\infty), \quad \text{for all} \ 2\le r \le k+1=\text{deg}(P)$, and
 \item $\Bigg\Vert \sum_{r=1}^{\text{deg}(P)} \frac{P^{(r)}(0)}{r!} f_{n} \otimes^{(r)}_{1} f_{n} \Bigg\Vert^{2}_{\HH^{\otimes 2}} \to 0$.
 \end{enumerate}
\end{description}
\end{propo}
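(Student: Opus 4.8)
The plan is to deduce Proposition \ref{p:z} directly from Lemma \ref{lemma1} and Theorem \ref{t:noup}, with essentially no new analytic input. The key observation is that Lemma \ref{lemma1} provides three expressions for one and the same quantity: the ``spectral'' sum $\sum_{r=2}^{\deg(Q)}\frac{Q^{(r)}(0)}{r!}\frac{\kappa_r(F)}{2^{r-1}(r-1)!}$, the Hilbert-space norm $\big\Vert\sum_{r=1}^{\deg(P)}\frac{P^{(r)}(0)}{r!}f\otimes_1^{(r)}f\big\Vert^2_{\HH^{\otimes 2}}$, and the $L^2(\Omega)$ expectation $\frac12 E\big(\sum_{r=1}^{\deg(P)}\frac{P^{(r)}(0)}{r!\,2^{r-1}}(\Gamma_{r-1}(F)-E\Gamma_{r-1}(F))\big)^2$. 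All three are nonnegative (two of them manifestly, being a norm and a second moment), and they are equal for every $F=I_2(f)$ in the second Wiener chaos; in particular they are equal for each $F_n$. Hence condition {\bf (ii).2} of Theorem \ref{t:noup}, namely that the spectral sum tends to $0$, is literally the same statement as condition {\bf (a).2} (the $L^2(\Omega)$ expression tends to $0$) and as condition {\bf (b).2} (the norm tends to $0$). Since conditions {\bf (ii).1}, {\bf (a).1} and {\bf (b).1} are verbatim identical (convergence of the first $k+1$ cumulants to those of $F_\infty$), we conclude that {\bf (a)} $\Leftrightarrow$ {\bf (b)} $\Leftrightarrow$ {\bf (ii)}, and by Theorem \ref{t:noup} each is equivalent to {\bf (i)}.

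Concretely, I would organise the proof in two short steps. First, I would quote Lemma \ref{lemma1}: applying the chain of equalities \eqref{e:2}--\eqref{e:4} to $F=F_n$ gives, for every fixed $n$,
\begin{equation*}
\sum_{r=2}^{\deg(Q)}\frac{Q^{(r)}(0)}{r!}\frac{\kappa_r(F_n)}{2^{r-1}(r-1)!}
=\Bigg\Vert\sum_{r=1}^{\deg(P)}\frac{P^{(r)}(0)}{r!}f_n\otimes_1^{(r)}f_n\Bigg\Vert^2_{\HH^{\otimes 2}}
=\frac12E\Bigg(\sum_{r=1}^{\deg(P)}\frac{P^{(r)}(0)}{r!\,2^{r-1}}\big(\Gamma_{r-1}(F_n)-E\Gamma_{r-1}(F_n)\big)\Bigg)^2.
\end{equation*}
Therefore the three sequences in $n$ obtained by letting $n\to\infty$ coincide term by term, so one of them converges to $0$ if and only if the others do. Second, I would observe that the cumulant conditions {\bf (ii).1}, {\bf (a).1}, {\bf (b).1} are identical as written, and that in {\bf (a).2} the argument $\Gamma_{r-1}(F)$ is meant with $F=F_n$ (I would point this out to avoid a notational ambiguity in the statement). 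Combining the two steps with Theorem \ref{t:noup} closes the argument.

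There is essentially no serious obstacle here, since the proposition is a corollary of a lemma proved just above; the only points requiring minor care are (i) noting that both new ``$2$.'' conditions are automatically nonnegative quantities, so that ``$\to 0$'' is unambiguous and no cancellation subtleties arise, and (ii) making sure the polynomial degree indexing matches, i.e. $\deg(P)=k+1$ and $\deg(Q)=2(k+1)$, so that the truncation of the sums in {\bf (a).2} and {\bf (b).2} at $k+1$ exactly reproduces the sums appearing in \eqref{e:3} and \eqref{e:4}. One should also recall, from Remark \ref{r:density} and Proposition \ref{thm-pasmal}, that each $F_n=I_2(f_n)\in\mathbb{D}^\infty$, so that the random variables $\Gamma_{r-1}(F_n)$ appearing in {\bf (a).2} are well defined and lie in $\mathbb{D}^\infty\subset L^2(\Omega)$; this is what legitimises writing the second moment in {\bf (a).2}. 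With these bookkeeping remarks in place, the proof is a two-line deduction and I would present it as such, emphasising that the genuinely substantive content has already been extracted in Lemma \ref{lemma1}.
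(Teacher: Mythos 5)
Your proposal is correct and coincides with the paper's own treatment: the paper states Proposition \ref{p:z} as an ``immediate consequence of Lemma \ref{lemma1}'', which is precisely the chain of equalities you invoke, noting that the three candidate conditions labelled ``2.'' are equal (up to the harmless factor $\tfrac12$ in \eqref{e:4}) for each $F_n$ and the cumulant conditions are verbatim identical. Your added bookkeeping remarks (nonnegativity, degree matching, $F=F_n$ in {\bf (a).2}) are accurate but not needed beyond what the paper leaves implicit.
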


\medskip

As anticipated, our aim in the sections to follow is to show that the equivalence between Condition {\bf (a)} in Proposition \ref{p:z} and Condition {\bf (i)} in Theorem \ref{t:noup} is indeed valid for sequence of random variables living in a finite sum of Wiener chaoses. The next statement provides a first, non dynamical version of this fact.

\begin{propo}\label{p:static}
Let the polynomial $P$ be defined as in \eqref{polynomialP} and consider again the random variable $F_\infty = I_2(f_\infty)$ defined in \eqref{target-wiener}. Let $F$ be a centered random variable living in a finite sum of Wiener chaoses, i.e. 
$F \in \bigoplus_{i=1}^{M} C_i$. Moreover, assume that
\begin{itemize}
 \item[\bf (i)]$\kappa_r (F) = \kappa_r (F_\infty)$, for all $2 \le r \le k+1=\text{deg}(P)$, and
\item[\bf (ii)] 

\begin{equation*}
              E \Bigg( \sum_{r=1}^{k+1} \frac{P^{(r)}(0)}{r! \ 2^{r-1}} \Big( \Gamma_{r-1}(F) -   E(\Gamma_{r-1}(F)) \Big) \Bigg)^2 = 0.
            \end{equation*}
\end{itemize}
Then, $F \stackrel{{\rm law}}{=} F_\infty,$ and $F\in C_2$.
\end{propo}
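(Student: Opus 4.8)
The plan is to deduce, from the algebraic hypotheses (i)--(ii), that $F$ actually satisfies a polynomial relation forcing it into $C_2$, and then to identify its law via cumulants. First I would exploit hypothesis (ii) together with the chain of identities in Lemma \ref{lemma1}: since the three quantities \eqref{e:2}--\eqref{e:4} coincide for \emph{any} element of the second chaos, one is tempted to mimic those computations for a general $F\in\bigoplus_{i=1}^M C_i$. The key observation is that the combination $\sum_{r=1}^{k+1}\frac{P^{(r)}(0)}{r!\,2^{r-1}}\big(\Gamma_{r-1}(F)-E[\Gamma_{r-1}(F)]\big)$ should, after the integration-by-parts machinery of Lemma \ref{computation2}, be recognized as (a multiple of) $P(L^{-1}?)\cdots$ — more precisely, I expect it to equal a constant times the projection onto the ``diagonal'' part of some operator built from $F$, so that hypothesis (ii) says this random variable vanishes almost surely. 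The natural guess is that it encodes $P(\text{something})=0$, where ``something'' is the Hilbert--Schmidt-type operator attached to the second-chaos component of $F$, or more robustly the statement $\langle DF,-DL^{-1}(\cdots)\rangle=0$ telescoped into an annihilation identity.

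The cleanest route is probably the following. Write $\Phi(F):=\sum_{r=1}^{k+1}\frac{P^{(r)}(0)}{r!\,2^{r-1}}\big(\Gamma_{r-1}(F)-E[\Gamma_{r-1}(F)]\big)$; hypothesis (ii) gives $\Phi(F)=0$ in $L^2(\Omega)$. Using Lemma \ref{computation2} with $\phi$ a polynomial, for every $k$ one obtains relations expressing $E[\phi^{(k-r)}(F)\,]$-type quantities through $E[\phi^{(k)}(F)\Gamma_r(F)]$; summing against the coefficients $P^{(r)}(0)/(r!\,2^{r-1})$ and using $\Phi(F)=0$ should collapse the sum to an identity of the form $E[F\,\psi(F)] = $ (lower-order terms), which upon choosing $\psi$ cleverly yields a \emph{recursion on the cumulants} $\kappa_{m}(F)$ of exactly the same shape as the recursion satisfied by $\kappa_m(F_\infty)$. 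Since by hypothesis (i) the first $k+1$ cumulants already agree with those of $F_\infty$, the recursion propagates the equality to all orders: $\kappa_m(F)=\kappa_m(F_\infty)$ for every $m\ge 1$. Because $F_\infty$ is determined by its cumulants (Point 3 of Proposition \ref{second-property}) and has sub-exponential moment growth, this gives $F\stackrel{{\rm law}}{=}F_\infty$.

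It then remains to show the stronger conclusion $F\in C_2$, i.e. that $F$ is genuinely a second-chaos element and not merely equidistributed with one. For this I would argue as follows: decompose $F=\sum_{i=1}^M I_i(g_i)$, and note that the moment/cumulant identities just proved, combined with the hypercontractivity and orthogonality of chaoses, constrain the $g_i$. Concretely, matching $\kappa_2$ and $\kappa_4$ (and using $F\stackrel{{\rm law}}{=}F_\infty$, so all cumulants are finite and fixed) together with the fact — provable from $\Phi(F)=0$ — that $Q(\text{op}_F)$ annihilates the relevant contraction kernels, should force $g_i=0$ for all $i\neq 2$; the vanishing of the higher-chaos components follows because $P$ has $0$ as a simple root and the extra factor of $x$ in $P(x)=x\prod(x-\alpha_i)$ is precisely what kills contributions lying outside $C_2$. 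Alternatively, once $F$ and $F_\infty$ have the same law and $F_\infty\in C_2$, one can invoke the rigidity of the second chaos under the relation \eqref{Gamma-indentity}: the identity $I_2(f\otimes_1^{(r)}f)=2^{-(r-1)}(\Gamma_{r-1}(F)-E[\Gamma_{r-1}(F)])$ extended to $F$ shows each $\Gamma_{r-1}(F)$ lives in $C_0\oplus C_2\oplus\cdots$ with controlled top degree, and the polynomial relation $P(\cdot)=0$ caps that degree at $2$.

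\textbf{Main obstacle.} The delicate point is the passage from the single scalar identity $\Phi(F)=0$ to a \emph{full} recursion on all cumulants of $F$: for $F$ in the second chaos this is transparent because $\Phi(F)$ is literally $P$ evaluated on the spectral measure, but for $F$ spread across several chaoses one must show that the ``spurious'' cross-chaos terms produced by the product/contraction formulas either telescope away or are themselves killed by $\Phi(F)=0$. I expect this bookkeeping — tracking which chaotic projections of the $\Gamma_i(F)$ survive and checking that hypothesis (ii) forces exactly the right linear combinations to vanish — to be the technical heart of the argument, and it is presumably where the authors import the extensions of \cite{n-po-1} alluded to in the introduction.
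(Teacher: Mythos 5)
Your first half --- deducing from hypothesis \textbf{(ii)} that the random variable $\Phi(F):=\sum_{r=1}^{k+1}\frac{P^{(r)}(0)}{r!\,2^{r-1}}\big(\Gamma_{r-1}(F)-E[\Gamma_{r-1}(F)]\big)$ vanishes a.s., feeding this through the integration-by-parts identity \eqref{e:gibp} with polynomial test functions so as to obtain a recursion expressing $E[F^{n+1}]$ in terms of $E[F^{n}],\dots,E[F^{n-k}]$ and $\kappa_2(F),\dots,\kappa_{k+1}(F)$, observing that $F_\infty$ satisfies the same recursion with the same initial data (hypothesis \textbf{(i)} together with the last assertion of Lemma \ref{lemma1}), and concluding $F\stackrel{\rm law}{=}F_\infty$ by moment determinacy (Point 3 of Proposition \ref{second-property}) --- is exactly the paper's argument. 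Your announced ``main obstacle'' is not one: formula \eqref{e:gibp} holds for any $X\in\mathbb{D}^\infty$, so no cross-chaos bookkeeping is required at this stage.

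The genuine gap is the conclusion $F\in C_2$, for which neither of your two suggestions is an argument. The identity \eqref{Gamma-indentity} is established only for $F=I_2(f)$; for a general element of $\bigoplus_{i\le M}C_i$ the variables $\Gamma_{r-1}(F)$ are not contractions of a single kernel, so there is no operator ``${\rm op}_F$'' on which $P$ or $Q$ acts, and the heuristic that the factor $x$ in $P(x)=x\prod_i(x-\alpha_i)$ ``kills contributions outside $C_2$'' has no content unless $F$ is already known to sit in the second chaos --- which is precisely what must be proved. The paper's route uses four ingredients absent from your sketch: (1) \cite[Theorem 6.12]{jonson}, which shows that the law of a variable in a finite sum of chaoses determines its top chaos order, so $F\stackrel{\rm law}{=}F_\infty$ forces the minimal $M$ to equal $2$ and $F=I_1(g)+I_2(h)$; (2) \cite[Theorem 3.1]{n-po-1} applied to the constant sequence $F_n=F_\infty$, yielding that $I_1(g)$ and $I_2(h)$ are independent; (3) since all cumulants of order $\ge 3$ of the Gaussian part vanish, independence gives $\sum_k\lambda_{f_\infty,k}^{3p}=\sum_k\lambda_{h,k}^{3p}$ for every $p$, and Lemma \ref{lemma-appendix} then matches the two eigenvalue sequences up to a permutation, whence $\sum_k\lambda_{f_\infty,k}^{2}=\sum_k\lambda_{h,k}^{2}$; (4) comparing second cumulants, $\|g\|_{\HH}^2+2\sum_k\lambda_{h,k}^2=2\sum_k\lambda_{f_\infty,k}^2$ forces $g=0$. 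Without some substitute for these steps the assertion $F\in C_2$ remains unproved in your proposal.
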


\begin{proof}
Let $\phi$ be a smooth function. Using the integration by parts formula (Lemma \ref{L : Tech1}) and Assumption $\bf{(ii)}$ in the statement, we obtain 
\begin{equation}\label{computation1}
\begin{split}
 E \big( F \phi(F) \big) &= \sum_{r=0}^{k-1} \frac{\kappa_{r+1}(F)}{r!}  E(\phi^{(r)}(F)) +  E(\phi^{(k)}(F) \Gamma_{k}(F))\\
&=\sum_{r=0}^{k-1} \frac{\kappa_{r+1}(F)}{r!}  E(\phi^{(r)}(F)) + \frac{\kappa_{k+1}(F)}{k!} E(\phi^{(k)}(F))\\
&+\sum_{r=1}^{k} \frac{2^{k-r+1}\kappa_{r}(F)}{(r-1)!r!} P^{(r)}(0)  E(\phi^{(k)}(F))\\
&- \sum_{r=1}^{k} \frac{2^{k-r+1}}{r!} P^{(r)}(0)  E(\phi^{(k)}(F) \Gamma_{r-1}(F))
\end{split}
\end{equation}
On the other hand, using \eqref{e:gibp} we obtain that
\begin{equation}\label{computation2}
\begin{split}
  E(\phi^{(k)}(F) \Gamma_{r-1}(F))&=  E(F \phi^{(k-(r-1))}(F) )\\
 &\quad- \sum_{s=1}^{r-1}  E(\phi^{(k-s)}(F))  E(\Gamma_{r-1-s}(F)).
 \end{split}
\end{equation}
Using the relation $ E(\Gamma_{r-1-s}(F)) = \kappa_{r-s}(F)/(r-s-1)!$, therefore deduce that, for every smooth test function $\phi$
\begin{eqnarray*}
E \big( F \phi(F) \big) &=&\sum_{r=0}^{k-1} \frac{\kappa_{r+1}(F)}{r!}  E(\phi^{(r)}(F)) + \frac{\kappa_{k+1}(F)}{k!} E(\phi^{(k)}(F))\\
&&+\sum_{r=1}^{k} \frac{2^{k-r+1}\kappa_{r}(F)}{(r-1)!r!} P^{(r)}(0)  E(\phi^{(k)}(F))\\
&&- \sum_{r=1}^{k} \frac{2^{k-r+1}}{r!} P^{(r)}(0)E[F\varphi^{(k-(r-1))}(F)]\\
&&+\sum_{r=1}^{k} \frac{2^{k-r+1}}{r!} P^{(r)}(0)\sum_{s=1}^{r-1}E[\phi^{(k-s)}(F)] \frac{\kappa_{r-s}(F)}{(r-s-1)!}. 
\end{eqnarray*}
Considering the test function $\phi(x)=x^n$ with $n > k$, we infer that $ E(F^{n+1})$ can be expressed in a recursive way in terms of the quantities $$ E(F^n),  E(F^{n-1}), \cdots, E(F^{n-k}), \,
\kappa_2(F), \cdots, \kappa_{k+1}(F)$$ and $P^{(1)}(0),$ $\cdots, P^{(k)}(0)$. Using Assumption ${\bf (i)}$ in the statement together with last assertion in Lemma \ref{lemma1}, we see that the moments of the random variable $F_\infty$ also satisfy the same recursive relation. These facts immediately imply that
\begin{equation*}
 E \left( F^n \right) =  E \left( F_\infty^n \right), \quad n \ge 1,
\end{equation*}
and the claim follows at once from Point 3 in Proposition \ref{second-property}. To prove that, in fact, $F \in C_2$, we assume that $M$ is the smallest natural number such that $F \in \bigoplus_{i=1}^{M} C_i$. Hence 
 $F \notin \bigoplus_{i=1}^{M-1} C_i$. Therefore, by applying \cite[Theorem 6.12]{jonson} to $F$, $F_\infty$ and the fact that $F \stackrel{{\rm law}}{=} F_\infty$, we deduce that $M=2$. Let assume that 
 $F = I_1(g)+ I_2(h)$ for some  $g \in \HH$ and $h \in \HH^{\otimes 2}$. Considering the trivial sequence $\{F_n\}_{n \ge 1}$ such that $F_n = F_\infty$, $n\geq 1$, using the fact that 
 $F \stackrel{{\rm law}}{=} F_\infty$ and applying \cite[Theorem 3.1]{n-po-1}, we deduce that $I_1(g)$ is independent of $I_2(h)$. Let $\{\lambda_{f_\infty,k}\}_{k \ge 1}$ and $\{ \lambda_{h,k}\}_{k \ge 1}$ denote the eigenvalues 
  corresponding to the Hilbert-Schmidt operator $A_{f_\infty}$ and $A_{h}$ associated with the kernels $f_\infty$ and $h$ respectively (see Section \ref{ss:swc}). Exploiting the independence of $I_1(g)$ and 
  $I_2(h)$ and Point 3 in Proposition \ref{second-property}, we infer that $$\sum_{k \in \N} \lambda_{f_\infty,k}^{3p} = \sum_{k \in \N} \lambda_{h,k}^{3p} \quad \forall \, p \ge 1.$$ As result, Lemma 
  \ref{lemma-appendix} in Appendix implies that for some permutation $\pi$ on $N$ we have $\lambda_{\infty,k} = \lambda_{h,\pi(k)}$ for $k \ge 1$, which in turn implies  
  \begin{equation}\label{key}
  \sum_{k \in \N} \lambda_{f_\infty,k}^{2} = \sum_{k \in \N} \lambda_{h,k}^{2}.
  \end{equation}
  On the other hand, from $F = I_1(g)+ I_2(h)  \stackrel{{\rm law}}{=} F_\infty$, and computing the second cumulant of both sides, one can easily deduce that if 
  $\kappa_{2}(I_{1}(g))= \E(I_{1}(g))^2= \Vert g \Vert_{\HH}^2 \neq 0$, then the equality  $(\ref{key})$ cannot hold. It follows that $I_1(g)=0$, and therefore $F\in C_2$. 
\end{proof}
\medskip

One of the arguments used in the previous proof will be exploited again in the next section. For future reference, we shall explicitly state the needed double implication in the form of a Lemma.

\begin{lemma}\label{main-lemma-2} Let $F$ be a centered random variable, with finite moments of all orders and such that $\kappa_r (F) = \kappa_r (F_\infty)$, for all $2 \le r \le k+1=\text{deg}(P)$. Then, $F \stackrel{{\rm law}}{=} F_\infty$ if and only if, for every polynomial mapping $\varphi : \R\to \R$,
\begin{eqnarray}
&&E \big( F \phi(F) \big)\label{e:lemmino} \\
&&=\Psi_\phi(F):= \sum_{r=0}^{k-1} \frac{\kappa_{r+1}(F)}{r!}  E(\phi^{(r)}(F)) + \frac{\kappa_{k+1}(F)}{k!} E(\phi^{(k)}(F))\notag \\
&&+\sum_{r=1}^{k} \frac{2^{k-r+1}\kappa_{r}(F)}{(r-1)!r!} P^{(r)}(0)  E(\phi^{(k)}(F))\notag \\
&&- \sum_{r=1}^{k} \frac{2^{k-r+1}}{r!} P^{(r)}(0)E[F\varphi^{(k-(r-1))}(F)]\notag \\
&&+\sum_{r=1}^{k} \frac{2^{k-r+1}}{r!} P^{(r)}(0)\sum_{s=1}^{r-1} E[\phi^{(k-s)}(F)]\frac{\kappa_{r-s}(F)}{(r-s-1)!}. \notag
\end{eqnarray}
\end{lemma}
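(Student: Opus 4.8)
The plan is to establish the two directions of the equivalence separately, using the machinery already assembled in the proof of Proposition \ref{p:static}. The forward direction is essentially contained there: if $F \stackrel{{\rm law}}{=} F_\infty$, then $F_\infty$ belongs to the second Wiener chaos (Point 1 of Proposition \ref{second-property}), so $F_\infty \in \mathbb{D}^\infty$ and the Malliavin-calculus identities apply to it; in particular, by the last assertion of Lemma \ref{lemma1} (relation \eqref{e:t0}), the random variable $F_\infty$ satisfies Assumption {\bf (ii)} of Proposition \ref{p:static}, namely $E\big(\sum_{r=1}^{k+1}\frac{P^{(r)}(0)}{r!\,2^{r-1}}(\Gamma_{r-1}(F_\infty)-E(\Gamma_{r-1}(F_\infty)))\big)^2=0$. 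The computation carried out in \eqref{computation1} and \eqref{computation2}, applied to $F_\infty$ in place of $F$ and with $\Gamma_k(F_\infty)$ rewritten via \eqref{Gamma-indentity} and the identity $E(\Gamma_{r-1-s}(F))=\kappa_{r-s}(F)/(r-s-1)!$, yields exactly the identity $E(F_\infty\phi(F_\infty))=\Psi_\phi(F_\infty)$ for every smooth $\phi$, and a fortiori for every polynomial $\phi$. Since by hypothesis $\kappa_r(F)=\kappa_r(F_\infty)$ for $2\le r\le k+1$ and $F\stackrel{{\rm law}}{=}F_\infty$, all the coefficients and all the expectations $E(\phi^{(j)}(F))$, $E(F\phi^{(j)}(F))$ appearing in $\Psi_\phi$ coincide for $F$ and $F_\infty$, so $E(F\phi(F))=\Psi_\phi(F)$ follows.

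For the converse direction, I would argue exactly as in the last part of the proof of Proposition \ref{p:static}, but now taking the identity $E(F\phi(F))=\Psi_\phi(F)$ as the given hypothesis rather than deriving it from Malliavin calculus. Specializing to $\phi(x)=x^n$ for $n>k$, the relation expresses $E(F^{n+1})$ recursively in terms of $E(F^n),\dots,E(F^{n-k})$, the cumulants $\kappa_2(F),\dots,\kappa_{k+1}(F)$, and the numbers $P^{(1)}(0),\dots,P^{(k)}(0)$. Because $F_\infty$ also satisfies $E(F_\infty\phi(F_\infty))=\Psi_\phi(F_\infty)$ (by the forward direction) and shares the same cumulants up to order $k+1$ with $F$, the sequences $\{E(F^n)\}_{n\ge1}$ and $\{E(F_\infty^n)\}_{n\ge1}$ obey the same recursion with the same initial data $E(F^n)=E(F_\infty^n)$ for $1\le n\le k$ (the low-order moments are determined by $\kappa_1(F)=\kappa_1(F_\infty)=0$ and $\kappa_r(F)=\kappa_r(F_\infty)$, $r\le k+1$, via \eqref{EQ : RecMom}). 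An immediate induction then gives $E(F^n)=E(F_\infty^n)$ for all $n\ge1$, and since $F_\infty$ is determined by its moments (Point 3 of Proposition \ref{second-property}), we conclude $F\stackrel{{\rm law}}{=}F_\infty$.

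The one point that requires a little care — and which I expect to be the main (mild) obstacle — is checking that the recursion obtained by setting $\phi(x)=x^n$ genuinely has ``triangular'' structure: one must verify that on the right-hand side of \eqref{e:lemmino} the highest-order moment that appears, after collecting the terms $E(\phi^{(r)}(F))$ with $\phi^{(r)}(x)=\frac{n!}{(n-r)!}x^{n-r}$ and the terms $E[F\phi^{(k-(r-1))}(F)]=E[F^{n+1-k+r-1}]=E[F^{n+r-k}]$, is indeed $E(F^{n+1})$ occurring only once (from the $r=1$ term of the fourth line, where $\varphi^{(k-(r-1))}=\varphi^{(k)}$ and $n+k-k=n$ gives $E[F^{n+1}]$ with coefficient $-2^{k}P^{(1)}(0)/1!$, which is nonzero since $P^{(1)}(0)=(-1)^k\prod_{i=1}^k(-\alpha_i)=\prod_{i=1}^k\alpha_i\ne0$ as the $\alpha_i$ are nonzero), so that one can solve for $E(F^{n+1})$. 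Once this bookkeeping is done, every other term on the right involves either a strictly lower moment of $F$ or a cumulant of order $\le k+1$, and the recursion closes. Note also that $E(F^{n+1})$ could in principle appear on the right-hand side through $E(\phi(F))=E(F^n)$ — no, that term is $E(F^n)$, not $E(F^{n+1})$ — so the only occurrence to track is the one just described; this is precisely the computation already implicit in Proposition \ref{p:static}, so no new idea is needed, merely its isolation as a self-contained statement.
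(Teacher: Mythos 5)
Your proof is correct and takes essentially the same route as the paper, which does not write out a separate proof of this lemma but points back to the argument given for Proposition \ref{p:static}: the identity $E(F_\infty\phi(F_\infty))=\Psi_\phi(F_\infty)$ comes from the computations \eqref{computation1}--\eqref{computation2} combined with \eqref{e:t0}, the functional $\Psi_\phi$ depends only on the law, and the converse is the moment recursion plus determinacy of $F_\infty$ by its moments. One correction to your final paragraph, though: the ``point that requires care'' rests on an arithmetic slip. With $\phi(x)=x^n$ one has $E[F\varphi^{(k-(r-1))}(F)]=\tfrac{n!}{(n-k+r-1)!}\,E[F^{\,n+r-k}]$, which for $r=1$ is proportional to $E[F^{\,n+1-k}]$ and for $r=k$ to $E[F^{\,n}]$; the moment $E(F^{n+1})$ therefore never appears on the right-hand side of \eqref{e:lemmino} at all (the largest moments there are the $E(F^n)$ from the $r=0$ term of the first sum and the $r=k$ term of the fourth line). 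So the recursion is already explicit, nothing needs to be ``solved for,'' and no condition such as $P^{(1)}(0)\neq 0$ enters. This is worth noting because your proposed fix would not have sufficed on its own terms: had $E(F^{n+1})$ genuinely occurred on the right with some coefficient $c_n\neq 0$, solvability would require $1-c_n\neq 0$ for every $n$, not merely $c_n\neq 0$. Fortunately the premise is false, and the triangular structure is exactly as asserted in the proof of Proposition \ref{p:static}. (A trivial further remark: the initial data for the induction are the moments of order up to $k+1$, not $k$, all of which are fixed by the matching cumulants via \eqref{EQ : RecMom}.)
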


\medskip

In the next section, which contains the main findings of the paper, we shall show that a slight variation of Condition {\bf (a)} in Proposition \ref{p:z} is basically necessary and sufficient for convergence in distribution towards $F_\infty$ for {\it any} sequence of random variables living in a finite sum of Wiener chaoses.

\subsection{A general criterion}

We recall that the \textit{total variation} distance $d_{\text{TV}}$ between the laws of two real-valued random variables $X$ and $Y$ is defined as

\begin{equation}
d_{\text{TV}} (F,G) = \sup_{A \in \mathcal{B}(\R)} \Big \vert \P (F \in A) - \P(G \in A) \Big \vert,
\end{equation}
where the supremum is taken over all Borel sets $A \subseteq \R$. We also write $ F_n \xrightarrow{{\rm TV}}F$ to indicate the asymptotic relation $d_{\rm TV}(F_n, F)\to 0$. \\

The next theorem is the main finding of the paper. Recall that the random variable $F_\infty$ has been defined in formula (\ref{target-wiener}). 

\begin{thm}\label{main-general-thm}
Let $\{F_n\}_{n\ge 1}$ be a sequence of random variables such that each $F_n$ lives in a finite sum of chaoses, i.e. $F_n \in \bigoplus_{i=1}^{M}C_i$ for $n \ge 1$ and some $M\geq 2$ (not depending on $n$). Consider the following three asymptotic relations, as $n\to \infty$:

\begin{description}

 \item[(i)] 
 \begin{equation}\label{Contv}
F_n\xrightarrow{{\rm TV}}~ F_\infty;
\end{equation}

\item[(ii)] The following relations 1.-2. are in order:
\begin{enumerate}\label{expcondi}
 \item $\kappa_r (F_n) \to \kappa_r (F_\infty), \quad \text{for all} \ 2\le r \le k+1=\text{deg}(P)$, and
 \item $ E \Bigg( \sum_{r=1}^{k+1}\frac{P^{(r)}(0)}{r! 2^{r-1}}\Big(\Gamma_{r-1}(F_n)- E[\Gamma_{r-1}(F_n)]\Big) \Bigg\vert F_n \Bigg) \xrightarrow{L^2}~0$.
\end{enumerate}

\item[(iii)] The following relations 1.-2. are in order:
\begin{enumerate}\label{expcondi2}
 \item $\kappa_r (F_n) \to \kappa_r (F_\infty), \quad \text{for all} \ 2\le r \le k+1=\text{deg}(P)$, and
 \item $ E \Bigg( \sum_{r=1}^{k+1}\frac{P^{(r)}(0)}{r! 2^{r-1}}\Big(\Gamma_{r-1}(F_n)- E[\Gamma_{r-1}(F_n)]\Big) \Bigg\vert F_n \Bigg) \xrightarrow{L^1}~0$.
\end{enumerate}

\end{description}

Then, one has the implications {\bf (ii)} $\to$ {\bf (i)} and {\bf (i)} $\to$ {\bf (iii)}.


\end{thm}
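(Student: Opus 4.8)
The plan is to prove the two implications separately, using the ``static'' Proposition \ref{p:static} and Lemma \ref{main-lemma-2} as the backbone, and adding the dynamical ingredients needed to pass from pointwise/conditional identities to weak convergence. Throughout, write $G_n := \sum_{r=1}^{k+1}\frac{P^{(r)}(0)}{r!\,2^{r-1}}\big(\Gamma_{r-1}(F_n)-E[\Gamma_{r-1}(F_n)]\big)$, so that conditions {\bf (ii).2} and {\bf (iii).2} read $E[G_n\mid F_n]\to 0$ in $L^2$, resp. $L^1$.

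For {\bf (ii)} $\to$ {\bf (i)}: first observe that, since each $F_n$ lives in $\bigoplus_{i=1}^M C_i$, the random variables $\{\Gamma_{r-1}(F_n)\}$ are polynomials in a fixed Gaussian family, hence hypercontractivity guarantees that the sequences $\{F_n\}$ and $\{G_n\}$ are bounded in every $L^p$ once their second moments are controlled; condition {\bf (ii).1} (convergence of the first $k+1$ cumulants) pins down $\mathrm{Var}(F_n)$, and combined with the product/contraction structure this yields uniform $L^p$ bounds on $F_n$. This compactness lets me extract, from any subsequence, a further subsequence along which $F_n$ converges in distribution to some limit $F_*$, with convergence of all moments (by uniform integrability). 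Now I run the computation of the proof of Proposition \ref{p:static} in its ``asymptotic'' form: applying the integration-by-parts identities (Lemma \ref{L : Tech1}) and the $\Gamma$-integration-by-parts formula \eqref{e:gibp} to a test monomial $\phi(x)=x^n$ with $n>k$, I get
\begin{equation*}
E[F_n\phi(F_n)] = \Psi_\phi(F_n) + E\big[\phi^{(k)}(F_n)\,G_n\big] + (\text{lower-order }\kappa\text{-corrections}),
\end{equation*}
where $\Psi_\phi$ is the functional of Lemma \ref{main-lemma-2}. The extra term is $E[\phi^{(k)}(F_n)G_n] = E[\phi^{(k)}(F_n)\,E[G_n\mid F_n]]$; since $\phi^{(k)}(F_n)$ is bounded in $L^2$ (uniform moments) and $E[G_n\mid F_n]\to 0$ in $L^2$ by {\bf (ii).2}, this term vanishes. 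Passing to the limit along the subsequence and using {\bf (ii).1} to replace each $\kappa_r(F_n)$ by $\kappa_r(F_\infty)$, the limiting moments of $F_*$ satisfy exactly the recursion of Lemma \ref{main-lemma-2} with $F_\infty$'s cumulants, hence $E[F_*^n]=E[F_\infty^n]$ for all $n$, so $F_*\stackrel{\rm law}{=}F_\infty$ by Point 3 of Proposition \ref{second-property}. Since the subsequence was arbitrary, $F_n\stackrel{\rm law}{\to}F_\infty$. To upgrade convergence in law to total variation, I invoke the fact (from \cite{n-po-1}, and its extensions used elsewhere in the paper) that a sequence of random variables in a fixed finite sum of chaoses that converges in distribution converges in total variation -- this is the standard ``density/Malliavin'' smoothing argument for chaotic sequences.

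For {\bf (i)} $\to$ {\bf (iii)}: here {\bf (iii).1} is immediate, because total-variation (indeed distributional) convergence together with uniform integrability of all polynomial moments -- again guaranteed by hypercontractivity on a fixed sum of chaoses, once one notes that $d_{\rm TV}$ convergence forces boundedness of variances -- yields $\kappa_r(F_n)\to\kappa_r(F_\infty)$ for every $r$. For {\bf (iii).2}, run the same integration-by-parts identity as above; it rearranges to
\begin{equation*}
E\big[\phi^{(k)}(F_n)\,G_n\big] = E[F_n\phi(F_n)] - \Psi_\phi(F_n) + o(1),
\end{equation*}
valid for every polynomial $\phi$. By {\bf (i)} and {\bf (iii).1}, the right-hand side tends to $E[F_\infty\phi(F_\infty)]-\Psi_\phi(F_\infty)$, which is $0$ by the ``only if'' direction of Lemma \ref{main-lemma-2} (since $F_\infty\stackrel{\rm law}{=}F_\infty$ trivially). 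Thus $E[\phi^{(k)}(F_n)G_n]\to 0$ for every polynomial $\phi$; equivalently $E[\psi(F_n)\,E[G_n\mid F_n]]\to 0$ for all polynomials $\psi$. Because $F_n\to F_\infty$ in distribution with all moments converging, polynomials are $L^2(\mathrm{law}(F_n))$-asymptotically dense enough to test $E[G_n\mid F_n]$ against; more precisely, writing $H_n:=E[G_n\mid F_n]$, which is itself a (conditional expectation of a) polynomial in a fixed chaos and hence bounded in $L^2$, one shows $\|H_n\|_{L^1}\to 0$ by a uniform-integrability plus polynomial-approximation argument: if not, a subsequence of $F_n$ converges in law to $F_\infty$ while $\mathrm{Law}(F_n,H_n)$ converges to some limit $(F_\infty,H_*)$ with $E[\psi(F_\infty)H_*]=0$ for all polynomials $\psi$, forcing $H_*=0$ a.s., contradicting $\liminf\|H_n\|_{L^1}>0$. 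This gives {\bf (iii).2}.

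The main obstacle I anticipate is the last approximation step in each direction: controlling the conditional expectation $E[G_n\mid F_n]$ via test polynomials requires (a) a robust uniform-integrability statement for the joint laws $(F_n, G_n)$ -- which should follow from hypercontractivity on $\bigoplus_{i=1}^M C_i$, but needs care because $M$ is fixed while the degrees of the relevant $\Gamma_i$-polynomials grow with $k$ -- and (b) the fact that polynomial test functions suffice to characterize the $L^1$- (resp. $L^2$-) limit of $E[G_n\mid F_n]$, which is where the convergence of all moments of $F_n$ is essential. A secondary technical point is the $d_{\rm TV}$-upgrade in {\bf (ii)} $\to$ {\bf (i)}: one must be sure the cited total-variation convergence criterion for chaotic sequences applies to sums of chaoses (not just a single chaos), which the paper presumably handles via the non-degeneracy built into the $\kappa$-conditions.
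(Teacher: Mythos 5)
Your proof of the implication \textbf{(ii)} $\to$ \textbf{(i)} is correct and is essentially the argument of the paper: tightness and hypercontractive moment bounds on a fixed finite sum of chaoses, subsequence extraction, the identity $E[F_n\phi(F_n)]-\Psi_\phi(F_n)=2^k\,E\big[\phi^{(k)}(F_n)\,E(G_n\mid F_n)\big]$ killed by Cauchy--Schwarz and hypothesis \textbf{(ii).2}, the method of moments via Lemma \ref{main-lemma-2} and Point 3 of Proposition \ref{second-property}, and the total-variation upgrade via Lemma \ref{hyper} (which is stated for finite sums of chaoses, so your secondary worry there is unfounded).

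The implication \textbf{(i)} $\to$ \textbf{(iii)} has a genuine gap at the last step. Writing $H_n=E[G_n\mid F_n]$, you correctly obtain $E[\psi(F_n)H_n]\to 0$ for each \emph{fixed} polynomial $\psi$, but since $H_n$ is $\sigma(F_n)$-measurable one has
\[
E|H_n|\;=\;\sup_{\|\psi\|_\infty\le 1}E\big[\psi(F_n)H_n\big],
\]
and pointwise-in-$\psi$ convergence does not give convergence of this supremum. Your compactness argument on the joint laws fails concretely: if $(F_n,H_n)\to(F_\infty,H_*)$ in law, the relations $E[\psi(F_\infty)H_*]=0$ for all polynomials $\psi$ only force $E[H_*\mid F_\infty]=0$, not $H_*=0$, because $H_*$ need not be a function of $F_\infty$ --- the graph structure of $(F_n,h_n(F_n))$ is not preserved under weak limits. (Think of $h_n(x)=\sin(2\pi nx)$ with $F_n$ uniform on $[0,1]$: every $E[\psi(F_n)h_n(F_n)]\to0$ by Riemann--Lebesgue, while $E|h_n(F_n)|=2/\pi$ for all $n$.) The paper closes this gap by making the convergence \emph{uniform} over test functions: it first shows $E[\phi^{(k)}(F_n)G_n]=\sum_{r=0}^kE[\phi^{(r)}(F_n)(A_{r,n}F_n+B_{r,n})]$ with constants depending continuously on the first $k+1$ cumulants only, and then uses the total-variation hypothesis \textbf{(i)} --- not merely convergence of moments --- to bound the right-hand side by a quantity of the form $C_M\,d_{\text{TV}}(F_n,F_\infty)+o(1)$ uniformly over all smooth $\phi$ supported in $[-M,M]$ with $\|\phi^{(k)}\|_\infty\leq1$; a density argument then extends the uniform bound to all Borel functions bounded by $1$ and supported in $[-M,M]$, and the remaining tail contribution on $\{|F_n|>M\}$ is controlled by Chebyshev together with the uniform $L^2$ bound on $G_n$. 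This is precisely where the TV convergence is used, and it cannot be replaced by moment convergence plus polynomial testing alone.
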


 \begin{rmk}{\rm We remark that, in the special case  $k=1=\alpha_k $, the condition appearing at Point 2 of item {\bf(ii)} in Theorem \ref{main-general-thm} is implied by the relation 
 
 \begin{equation}\label{e:uf}
 E \big( \Gamma_1(F_n) - F_n - 2 \big)^2 \to 0.
 \end{equation}
 When $F_n =I_q(f_n)$, this corresponds to the condition appearing at Point (iii) of Part (B) of Theorem \ref{T:NPEC}, by taking into account the fact that, for a multiple integral $F=I_q(f)$ of order $q$, we have the relation 
  $\Gamma_1(F)=\frac{1}{q} \Vert DF \Vert_{\HH}^2$. Note that, as explained in \cite{n-pe-2}, the asymptotic relation \eqref{e:uf} cannot be fulfilled by a sequence $F_n$ such that $F_n =I_q(f_n)$ with $q$ odd and $E[F_n^2]\to 2$.}
 \end{rmk}

In order to prove Theorem \ref{main-general-thm}, we need an additional lemma.

\begin{lemma}[See Theorem 3.1 in \cite{n-po-2}]\label{hyper}
Let $\{ F_n\}_{n \ge 1}$ be a sequence of non-zero random variables living in a finite sum of Wiener chaoses, i.e.  $F_n \in \bigoplus_{i=0}^{M} C_i {,\,\,\forall n\ge 1}$. Assume that the 
sequence $\{F_n\}_{n \ge 1}$ converges in distribution to some non-zero target random variable $F$, as $n$ tends to infinity. Then, 
\begin{equation}\label{e:hyper}
\sup_{n \ge 1}  E(\vert F_n \vert^r) < \infty, \quad \forall \ r \ge 1,
\end{equation}
and $F_n\xrightarrow{{\rm TV}}~ F$. Moreover, the distribution of $F$ is necessarily absolutely continuous with respect to the Lebesgue measure.
\end{lemma}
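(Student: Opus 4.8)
The plan is to derive all three conclusions from a single structural feature of the fixed space $\bigoplus_{i=0}^{M}C_i$: Nelson's \emph{hypercontractivity}, which ensures that on this space all $L^p(\Omega)$-norms are equivalent, i.e.\ there is a constant $c_{p,M}$ with $E[|G|^p]\le c_{p,M}\,E[G^2]^{p/2}$ for every $G\in\bigoplus_{i=0}^{M}C_i$ and every $p\ge 2$. I would use this first to obtain the moment bounds \eqref{e:hyper}, and then, in tandem with the Malliavin calculus, to upgrade convergence in law to convergence in total variation.

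First I would prove $\sup_n E[F_n^2]<\infty$, after which \eqref{e:hyper} is immediate from hypercontractivity. The argument is by renormalisation and contradiction: if $c_n:=E[F_n^2]^{1/2}$ were unbounded along a subsequence, put $G_n:=F_n/c_n\in\bigoplus_{i=0}^{M}C_i$, so that $E[G_n^2]=1$. Tightness of $\{F_n\}$ (which follows from $F_n\to F$ in law) gives, for each $\delta>0$, $P(|G_n|>\delta)=P(|F_n|>c_n\delta)\to 0$, i.e.\ $G_n\to 0$ in probability; meanwhile hypercontractivity bounds $\sup_n E[G_n^4]\le c_{4,M}$, so $\{G_n^2\}$ is uniformly integrable and therefore $E[G_n^2]\to 0$, contradicting $E[G_n^2]=1$. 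Once $\sup_n E[F_n^2]<\infty$ is secured, hypercontractivity gives $\sup_n E[|F_n|^r]<\infty$ for all $r$; the resulting uniform integrability of $\{|F_n|^r\}$ converts convergence in law into convergence of all moments, $E[F_n^r]\to E[F^r]$, and in particular $F$ has finite moments of every order.

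It remains to establish the total variation convergence and the absolute continuity of $F$, which is the technical core. I would argue through Malliavin calculus. Each $F_n$ lies in $\mathbb{D}^\infty$ and, being a non-constant element of a finite sum of chaoses, satisfies $\|DF_n\|_{\HH}^2>0$ almost surely, so by the classical Nualart criteria its law is absolutely continuous with a smooth density $p_n$. The finite-chaos structure, combined with hypercontractivity, yields uniform bounds for $\{F_n\}$ in every Sobolev space $\mathbb{D}^{k,p}$, because the iterated derivatives $D^jF_n$ again live in a fixed finite sum of chaoses with norms controlled by $E[F_n^2]$. Feeding these bounds into the Malliavin integration-by-parts representation of $p_n$ and its derivatives, I would obtain uniform-in-$n$ control of $p_n$ on compact sets; an Arzel\`a--Ascoli argument then extracts a subsequence along which $p_n$ converges uniformly on compacts, and the already-proven convergence in law identifies the limit, giving $\|p_n-p\|_{L^1}\to 0$. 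This simultaneously yields $d_{\mathrm{TV}}(F_n,F)\to 0$ and shows that $F$ has density $p$, hence is absolutely continuous.

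The main obstacle is precisely the uniform control of the densities: the integration-by-parts bounds involve negative moments of the Malliavin energy $\|DF_n\|_{\HH}^2$, and nothing a priori prevents this quantity from degenerating along the sequence. I would address this by a compactness-and-contradiction scheme tailored to the finite-chaos setting. Using the uniform $L^2$-bound, from any subsequence one extracts a further subsequence along which each chaotic kernel of $F_n$ converges weakly in the relevant symmetric tensor power, producing a weak limit $\widetilde F\in\bigoplus_{i=0}^{M}C_i$; the moment convergence established above, together with the finite-chaos structure, is used to identify $\widetilde F\stackrel{\mathrm{law}}{=}F$, so that $\widetilde F$ is non-constant and hence non-degenerate. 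This non-degeneracy of the limit is what rules out asymptotic vanishing of $\|DF_n\|_{\HH}^2$ and closes the argument. This localisation step is the delicate point, and it is exactly here that the hypothesis that $F$ is non-zero enters in an essential way.
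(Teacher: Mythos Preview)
The paper does not give a proof of this lemma; it is simply quoted from Theorem~3.1 of \cite{n-po-2}. So the comparison is against that reference.

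Your first block---the hypercontractivity-and-contradiction argument yielding $\sup_n E[F_n^2]<\infty$ and then \eqref{e:hyper}---is correct and is essentially how the moment bound is obtained in \cite{n-po-2}.

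The second block has a genuine gap, precisely at the point you flag as delicate. You correctly isolate the obstacle: uniform-in-$n$ integrability of negative powers of $\|DF_n\|_{\HH}^2$ (equivalently, a uniform small-ball estimate $\sup_n P(\|DF_n\|_{\HH}^2<\varepsilon)\to 0$ as $\varepsilon\downarrow 0$). But the resolution you sketch---pass to a subsequence along which the chaotic kernels converge weakly, call the resulting element $\widetilde F\in\bigoplus_{i=0}^M C_i$, and then identify $\widetilde F\stackrel{\rm law}{=}F$---does not work. Weak convergence of the kernels does not pin down the limiting law: take $F_n=I_1(e_n)$ with $\{e_n\}$ orthonormal in $\HH$; then every $F_n$ is $\mathcal N(0,1)$, so $F\sim\mathcal N(0,1)$, yet $e_n\rightharpoonup 0$ in $\HH$ and hence $\widetilde F=0$. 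The ``identification'' $\widetilde F\stackrel{\rm law}{=}F$ is therefore false in general, and the contradiction scheme collapses. Moment convergence of $F_n$ says nothing about the moments of a weak-kernel limit, so invoking it does not repair the step.

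The proof in \cite{n-po-2} avoids compactness of kernels altogether and instead obtains the uniform non-degeneracy \emph{quantitatively}, via the Carbery--Wright anti-concentration inequality: for a polynomial $Q$ of degree at most $d$ in a Gaussian vector one has $P(|Q|\le\varepsilon)\le C_d\,(\varepsilon/\|Q\|_{L^2(\Omega)})^{1/d}$. Since $\|DF_n\|_{\HH}^2$ lives in $\bigoplus_{i=0}^{2(M-1)}C_i$ and has $E[\|DF_n\|_{\HH}^2]\ge\mathrm{Var}(F_n)\to\mathrm{Var}(F)>0$, this yields a uniform bound $\sup_n P(\|DF_n\|_{\HH}^2<\varepsilon)\le C\,\varepsilon^{1/(2M-2)}$, hence uniform negative-moment bounds. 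From there the Malliavin integration-by-parts argument you describe does go through and delivers both the total-variation convergence and the absolute continuity of the law of $F$. If you want to repair your write-up, replacing the weak-kernel compactness paragraph by a direct appeal to Carbery--Wright is the cleanest fix.
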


\medskip

\noindent \textit{Proof of Theorem \ref{main-general-thm}}. 

\smallskip

\noindent [Proof of $\bf(ii) \to \bf(i)$] Assumption 1 in $\bf(ii)$ implies that $\sup_{n \ge 1} E\left(F_n^2\right) < \infty$. Hence, the sequence $\{F_n\}_{n \ge 1}$ is tight. This yields that, for any subsequence 
$\{F_{n_{k}}\}_{k \ge 1}$, there exists a sub-subsequence $\{F_{n_{k_{l}}}\}_{l \ge 1}$ and a random variable $F$ such that $F_{n_{k_{l}}} \stackrel{{\rm law}}{\rightarrow} F $, as $l$ tends to infinity. In order to show the desired implication, we have now to show that, necessarily, $F$ has the same distribution as $F_\infty$. To simplify the discussion, we may assume that $\{F_{n_{k_{l}}}\}_{l \ge 1}=\{F_n\}_{n \ge 1}$. By exploiting \eqref{e:hyper} together with the fact that the sequence $\{F_n\}_{n\geq 1}$ lives in a fixed finite sum of Wiener chaoses, we deduce that, for every polynomial $\phi$,
\begin{equation}
E\left( F_n \phi(F_n) \right) \to  E \left( F \phi(F) \right), \quad n \to \infty.
\end{equation}
and
\begin{eqnarray}
&&\Psi_{\phi}(F_n) \longrightarrow \Psi_{\phi}(F), \quad \text{as} \ n \to \infty, \label{e:b}
\end{eqnarray} 
where we have used the notation  \eqref{e:lemmino}. By virtue of Lemma \ref{main-lemma-2}, in order to show the desired implication, it is then sufficient to prove the asymptotic relation
\begin{equation}\label{claim}
\Big \vert  E\left( F_n \phi(F_n) \right) - \Psi_{\phi}(F_n) \Big \vert \to 0, \quad n\to\infty,
\end{equation}
for every polynomial $\phi$. To show \eqref{claim}, we can use several times integration by parts (see Lemma \ref{L : Tech1}) to infer that
\begin{eqnarray*}
&&\Big \vert  E\left( F_n \phi(F_n) \right)  - \Psi_{\phi}(F_n) \Big \vert \\
&& = 2^k  E \Bigg[ \phi^{(k)}(F_n)\Bigg( \sum_{r=1}^{k+1} \frac{P^{(r)}(0)}{r! \ 2^{r-1}} \Big( \Gamma_{r-1}(F_n) -   E(\Gamma_{r-1}(F_n)) \Big) \Big \vert F_n\Bigg) \Bigg]\\
&& \le 2 ^k \sqrt{ E\left( \phi^{(k)}(F_n)\right)^2} \times \\
&&\quad\quad\quad \times \sqrt{ E \Bigg( \sum_{r=1}^{k+1} \frac{P^{(r)}(0)}{r! \ 2^{r-1}} \Big( \Gamma_{r-1}(F_n) -   E(\Gamma_{r-1}(F_n)) \Big) \Big \vert F_n \Bigg)^2}.
\end{eqnarray*}
Now, a standard application of Lemma \ref{hyper} shows that $$\sup_{n \ge 1} E \left( \phi^{(k)}(F_n)\right)^2 < \infty,$$ and \eqref{claim} follows by exploiting Assumption $2$ at Point {\bf (ii)}.\\

\smallskip

\noindent[Proof of $\bf(i) \to (iii)$] The proof is divided into several steps. Take $\phi\in\mathcal{C}^\infty_c$ with support in $[-M,M]$ where $M>0$ and $\|\phi^{(k)}\|_\infty\leq1$.~\\ 
\underline{Step 1.} We have:
\begin{eqnarray*}
&& E\Bigg(\phi^{(k)}(F_n )  \sum_{r=1}^{k+1}\frac{P^{(r)}(0)}{r! 2^{r-1}}\Big(\Gamma_{r-1}(F_n )- E(\Gamma_{r-1}(F_n ))\Big)\Bigg)\\
&&\quad\quad\quad\quad= \sum_{r=1}^{k+1}\frac{P^{(r)}(0)}{r! 2^{r-1}}  E(\phi^{(k)}(F_n ) \Gamma_{r-1}(F_n ))
 \\
&& \quad\quad\quad\quad \quad\quad-  E[\phi^{(k)}(F_n)] \sum_{r=1}^{k+1}\frac{P^{(r)}(0)}{r! 2^{r-1}}  E(\Gamma_{r-1}(F_n))\\
&&\quad\quad\quad\quad = \sum_{r=1}^{k+1}\frac{P^{(r)}(0)}{r! 2^{r-1}}  E(F_n  \phi^{(k-(r-1))}(F_n ) ) \\
&&\quad\quad\quad\quad\quad\quad - \sum_{r=1}^{k+1}\frac{P^{(r)}(0)}{r! 2^{r-1}} \sum_{s=1}^{r-1}  E(\phi^{(k-s)}(F_n ))  E(\Gamma_{r-1-s}(F_n ))\\
&&\quad\quad\quad\quad\quad\quad\quad\quad  -  E(\phi^{(k)}(F_n )) \sum_{r=1}^{k+1}\frac{P^{(r)}(0)}{r! 2^{r-1}}  E(\Gamma_{r-1}(F_n ))\\
&&\quad\quad\quad\quad=\sum_{r=0}^{k} E\left(\phi^{(r)}(F_n )\left(A_{r,n} F_n +B_{r,n}\right)\right).
\end{eqnarray*}
Here $\{A_{r,n},B_{r,n}\}_{0\le r\le k}$ are constants continuously depending only on the $k+1$ first cumulants of $F_n $. Since (\ref{Contv}) holds and since $\{F_n \}_{n \ge1}$ is bounded in $L^{k+1}(\Omega)$, for each $r\in\{1,2,\cdots,k+1\}$ we have 
$$\kappa_r(F_n )\to2^{r-1} (r-1)!\sum_{i=1}^k \alpha_i^r =\kappa_r\left(F_\infty\right).$$
This yields that
\begin{equation}\label{fondamentalfact}
\begin{array}{l}
 E\Bigg(\phi^{(k)}(F_n )  \sum_{r=1}^{k+1}\frac{P^{(r)}(0)}{r! 2^{r-1}}\Big(\Gamma_{r-1}(F_n )- E(\Gamma_{r-1}(F_n ))\Big)\Bigg)\\
\to E\Bigg(\phi^{(k)}(F_\infty)  \sum_{r=1}^{k+1}\frac{P^{(r)}(0)}{r! 2^{r-1}}\Big(\Gamma_{r-1}(F_\infty)- E(\Gamma_{r-1}(F_\infty))\Big)\Bigg) = 0,
\end{array}
\end{equation}
where we have used Lemma \ref{lemma1}.
\vskip0.5cm

%
%
\noindent\underline{Step 2.} The conclusion at Step 1 implies that, for each fixed $\phi\in\mathcal{C}^\infty_c$ with support in $[-M,M]$, such that $\|\phi^{(k)}\|_\infty\leq1$, we have:
\begin{equation}\label{fondamentalfact2}
 E\Bigg(\phi^{(k)}(F_n )  \sum_{r=1}^{k+1}\frac{P^{(r)}(0)}{r! 2^{r-1}}\Big(\Gamma_{r-1}(F_n )- E(\Gamma_{r-1}(F_n ))\Big)\Bigg)\to 0.
\end{equation}
For convenience we set 
$$\mathcal{E}_M=\Big\{\phi\in\mathcal{C}_c^\infty\Big|\,\|\phi^{(k)}\|_\infty\leq 1,\,\text{supp}(\phi)\subset[-M,M]\Big\}.$$
Exploiting again the arguments used in Step 1 we infer that
\begin{eqnarray*}
 &&E\Bigg(\phi^{(k)}(F_n )  \sum_{r=1}^{k+1}\frac{P^{(r)}(0)}{r! 2^{r-1}}\Big(\Gamma_{r-1}(F_n )- E(\Gamma_{r-1}(F_n ))\Big)\Bigg)\\
 &&=\sum_{r=0}^{k} E\left(\phi^{(r)}(F_n )\left(A_{r,n} F_n +B_{r,n}\right)\right).\\
\end{eqnarray*}
One has that
\begin{eqnarray*}
&&\sup_{\phi\in\mathcal{E}_M}\left| \sum_{r=0}^{k} E\left(\phi^{(r)}(F_n )\left(A_{r,n} F_n +B_{r,n}\right)\right)\right.\\
&&\quad\quad\quad\quad\quad\left. -\sum_{r=0}^{k} E\left(\phi^{(r)}(F_n )\left(A_{r,\infty} F_n +B_{r,\infty}\right)\right)\right|\\
&\leq&\sup_{\phi\in\mathcal{E}_M}\sum_{r=0}^k\|\phi^{(r)}\|_\infty\left(|A_{r,n}-A_{r,\infty}|\sup_{n\ge 1} E(|F_n |)+|B_{r,n}-B_{r,\infty}|\right)\\
&\leq& M^k\left( \sup_{n\ge 1} E( |F_n |)+1\right) \sum_{r=0}^k\left(|A_{r,n}-A_{r,\infty}|+|B_{r,n}-B_{r,\infty}|\right)\\
&\to 0,&
\end{eqnarray*}
where we have used the fact that for $\phi\in\mathcal{E}_M$, and for any $0\le r \le k$, we have $\|\phi^{(r)}\|_\infty \le M^k$. On the other hand, we know that $F_n  \xrightarrow[n\to\infty]{TV} F_\infty$. The following equality holds
\begin{eqnarray*}
&& \left|\sum_{r=0}^{k} E\left(\phi^{(r)}(F_n )\left(A_{r,\infty} F_n +B_{r,\infty}\right)\right)\right|\\
&&=\left|\sum_{r=0}^{k} E\left(\phi^{(r)}(F_n )\left(A_{r,\infty} F_n +B_{r,\infty}\right)\right)\right.\\
&& \left.-\sum_{r=0}^{k} E\left(\phi^{(r)}(F_\infty)\left(A_{r,\infty} F_\infty+B_{r,\infty}\right)\right)\right|.
\end{eqnarray*}
The expression on the right-hand side of the previous equality is bounded by
\begin{eqnarray*}
&&\sum_{r=0}^{k} A_{r,\infty}\left| E\left(\phi^{(r)}(F_n )F_n -\phi^{(r)}(F_\infty)F_\infty\right)\right|\\
&& +\sum_{r=0}^{k} B_{r,\infty} \left| E\left(\phi^{(r)}(F_n )-\phi^{(r)}(F_\infty)\right)\right|\\
&\le&{M^{k+1} \left(\sum_{r=0}^k A_{r,\infty}+B_{r,\infty}\right)d_{TV}(F_n,F_\infty).}
\end{eqnarray*}
To obtain the previous estimate, we have used the facts that $$\sup_{x\in[-M,M]}\|\phi^{(r)}(x) x\|\le M^{k+1} \quad \text{and} \quad \|\phi^{(r)}\|_\infty \le M^k.$$ 
Now, letting $n\to\infty$, we deduce that
\begin{equation}\label{Almostdone}
\sup_{\phi\in\mathcal{E}_M}\left|\sum_{r=0}^{k} E\left(\phi^{(r)}(F_n )\left(A_{r,\infty} F_n +B_{r,\infty}\right)\right)\right|\to 0,
\end{equation}
as well as
\begin{equation}\label{Almostdone2}
\sup_{\phi\in\mathcal{E}_M}\left|\sum_{r=0}^{k} E\left(\phi^{(r)}(F_n )\left(A_{r,n} F_n +B_{r,n}\right)\right)\right|\to 0.
\end{equation}
~\\
\underline{Step 3.} Let $\mathcal{F}_M$ be the set of Borel functions bounded by $1$ and supported in $[-M,M]$. By density we have
\begin{eqnarray*}
&&\sup_{\phi\in\mathcal{E}_M}\left|\sum_{r=0}^{k} E\left(\phi^{(r)}(F_n )\left(A_{r,n} F_n +B_{r,n}\right)\right)\right|\\
&=&\sup_{\phi\in\mathcal{E}_M} \left| E\left(\phi^{(k)}(F_n )\sum_{r=1}^{k+1}\frac{P^{(r)}(0)}{r! 2^{r-1}}\Big(\Gamma_{r-1}(F_n )- E(\Gamma_{r-1}(F_n ))\Big)\right)\right|\\
&=&\sup_{\phi\in \mathcal{F}_M} \left| E\left(\phi(F_n )\sum_{r=1}^{k+1}\frac{P^{(r)}(0)}{r! 2^{r-1}}\Big(\Gamma_{r-1}(F_n )- E(\Gamma_{r-1}(F_n ))\Big)\right)\right|\\
&&\xrightarrow[n\to\infty]{{\rm (Step 3)}}~0.
\end{eqnarray*}
To achieve the proof, we notice that
\begin{eqnarray*}
&& E\left(\left| E\left(\sum_{r=1}^{k+1}\frac{P^{(r)}(0)}{r! 2^{r-1}}\Big(\Gamma_{r-1}(F_n )- E(\Gamma_{r-1}(F_n ))\Big)\Big|F_n \right)\right|\right)\\
&=&\sup_{\|\phi\|_\infty\le 1}\left| E\left(\phi(F_n )\sum_{r=1}^{k+1}\frac{P^{(r)}(0)}{r! 2^{r-1}}\Big(\Gamma_{r-1}(F_n )- E(\Gamma_{r-1}(F_n ))\Big)\right)\right|\\
&\le&\sup_{\phi\in \mathcal{F}_M} \left| E\left(\phi(F_n )\sum_{r=1}^{k+1}\frac{P^{(r)}(0)}{r! 2^{r-1}}\Big(\Gamma_{r-1}(F_n )- E(\Gamma_{r-1}(F_n ))\Big)\right)\right|\\
&+& E\left(\mathbf{1}_{\{|F_n |>M\}}\left|\sum_{r=1}^{k+1}\frac{P^{(r)}(0)}{r! 2^{r-1}}\Big(\Gamma_{r-1}(F_n )- E(\Gamma_{r-1}(F_n ))\Big)\right|\right)\\
&\le&\sup_{\phi\in \mathcal{F}_M} \left| E\left(\phi(F_n )\sum_{r=1}^{k+1}\frac{P^{(r)}(0)}{r! 2^{r-1}}\Big(\Gamma_{r-1}(F_n )- E(\Gamma_{r-1}(F_n ))\Big)\right)\right|\\
&+&\sqrt{\P(|F_n |>M)}\times\\
&&\quad\quad \times \sup_n\sqrt{ E \Bigg( \sum_{r=1}^{k+1} \frac{P^{(r)}(0)}{r! \ 2^{r-1}} \Big( \Gamma_{r-1}(F_n ) -   E(\Gamma_{r-1}(F_n )) \Big)\Bigg)^2}\\
&& {\xrightarrow[M\to\infty]{}~0.}
\end{eqnarray*}

\qed
\begin{rmk}[On Theorem \ref{t:noup}]{\rm \label{r:r} As anticipated, Theorem \ref{main-general-thm} allows one to deduce an alternate proof of the implication {\bf (ii)} $\rightarrow$ {\bf (i)} in Theorem \ref{t:noup}. Indeed, if 
Assumption {\bf (ii)} in Theorem \ref{t:noup} is verified, one can apply \eqref{e:4} to deduce that
$$
E \Bigg( \sum_{r=1}^{\text{deg}(P)} \frac{P^{(r)}(0)}{r! \ 2^{r-1}} \Big( \Gamma_{r-1}(F_n) -   E(\Gamma_{r-1}(F_n)) \Big) \Bigg)^2\to 0,
$$
and the conclusion follows immediately from Theorem \ref{main-general-thm}, as well as a standard application of Jensen's inequality. }
\end{rmk}

\section{Example: two eigenvalues}

We will now illustrate the main findings of the present paper by considering the case of a target random variable of the type $F_\infty =I_2(f_\infty)$, where the Hilbert-Schmidt operator $A_{f_\infty}$ associated the kernel $f_\infty$ has only two non-zero eigenvalues $\alpha_1 \neq \alpha_2$, thus implying that
\begin{equation}\label{target3}
F_\infty=\alpha_1 \left( N^{2}_{1} -1\right) + \alpha_2 \left( N^{2}_{2} -1\right)
\end{equation}
where $N_1$ and $N_2$ are independent $\mathcal{N}(0,1)$ (see Proposition \ref{second-property}).

\begin{thm}\label{q-chaos}
Assume that $F_\infty= I_2(f_\infty)$ given by \eqref{target3}. Let $q\ge2$ and $\{ F_n \}_{n \ge 1}= \{I_q(f_n)\}_{n \ge 1}$ be a sequence of multiple Wiener integrals of order $q$ such that 
\begin{equation*}
\lim_{n \to \infty} E(F_n^2)= 2 \lim_{n \to \infty} \Vert f_n\Vert_{\HH^{\otimes q}}^2 =1.
\end{equation*}
Assume that, as $n$ tends to infinity, we have
\begin{itemize} 
 \item[\bf (a)]$ \langle f_n{\tilde{\otimes}}_{\frac{q}{2}}f_n, f_n \rangle_{\HH^{\otimes q}} \to 0, \quad \text{when} \ q \ \text{is even}$, and
\item[\bf (b)] the following three asymptotic conditions {\bf (b1)}--{\bf (b3}) take place:
\begin{itemize}
\item[\bf (b1)]
\begin{eqnarray*}
 && \Bigg \Vert  \mathop{\sum_{r=1}^{q} \sum_{s=1}^{(2q-2r) \wedge q}}_{r+s=q} \frac{1}{4}q^2 (r-1)!(s-1)!{q-1 \choose r-1}^2   \times \\
&& \times {q-1 \choose s-1}  {2q-2r-1  \choose s-1}  \left( f_n{\tilde{\otimes}}_{r}f_n \right){\tilde{\otimes}}_{s}f_n \\ 
 &&- \frac{\alpha_1 + \alpha_2}{2} q (\frac{q}{2} -1)! {q-1 \choose \frac{q}{2}-1} f_n{\tilde{\otimes}}_{\frac{q}{2}}f_n  + \alpha_1 \alpha_2  f_n 
 \Bigg\Vert_{\HH^{\otimes q}}^2\!\!\!\!\!\! \to 0
 \end{eqnarray*}
(when $q$ is not even or $\alpha_1 = - \alpha_2$, then the term in the middle -- involving the contraction of order $\frac{q}{2}$ -- is removed automatically).

 \item[\bf (b2)] for all $2 \le k \le 2q -2$, we have 
 
 \begin{eqnarray*}
 && \Bigg \Vert \mathop{ \mathop{\mathop{\sum_{r=1}^{q} \sum_{s=1}^{(2q-2r) \wedge q}}_{(r,s)\neq(\frac{q}{2},q)}}_{r+s\neq q}}_{3q-2(r+s)=k}  (r-1)!(s-1)!{q-1 \choose r-1}^2\times \\
  &&\times  {q-1 \choose s-1} 
   {2q-2r-1 \choose s-1}  \left( f_n{\tilde{\otimes}}_{r}f_n \right){\tilde{\otimes}}_{s}f_n  \\
  && - \frac{\alpha_1 + \alpha_2}{2} \mathop{\sum_{ \frac{q}{2} \neq r=1 }^{q-1}}_{2q -2r=k} q (r -1)! {q -1 \choose r-1}^2 f_n{\tilde{\otimes}}_{r}f_n  \Bigg\Vert_{\HH ^{\otimes k}}^2 \to 0.
\end{eqnarray*}

\item[\bf (b3)] for all $ 2q -1\le  k \le 3q-4$, we have
\begin{equation*}
\begin{split}
 \Bigg \Vert \mathop{ \mathop{\mathop{\sum_{r=1}^{q} \sum_{s=1}^{(2q-2r) \wedge q}}_{(r,s)\neq(\frac{q}{2},q)}}_{r+s\neq q}}_{3q-2(r+s)=k}  (r-1)!(s-1)!{q-1 \choose r-1}^2  {q-1 \choose s-1} & {2q-2r-1  \choose s-1} \\
 \times \left( f_n{\tilde{\otimes}}_{r}f_n \right){\tilde{\otimes}}_{s}f_n  \Bigg\Vert_{\HH ^{\otimes k}}^2 \to 0.
\end{split}
 \end{equation*}
 \end{itemize}
 \end{itemize}
Then,
\begin{equation*}
F_n \stackrel{{\rm law}}{\to} F_\infty.
\end{equation*}
\end{thm}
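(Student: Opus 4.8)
\textbf{Proof proposal for Theorem \ref{q-chaos}.}

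The plan is to verify the sufficient condition \textbf{(ii)} of Theorem \ref{main-general-thm} in the special case $k=2$ with distinct eigenvalues $\alpha_1,\alpha_2$, and then invoke that theorem to conclude convergence in distribution (indeed in total variation). Here the polynomial is $P(x)=x(x-\alpha_1)(x-\alpha_2)$, so $\mathrm{deg}(P)=3$, and the relevant linear combination in condition \textbf{(ii)}.2 involves $\Gamma_0(F_n)=F_n$, $\Gamma_1(F_n)$ and $\Gamma_2(F_n)$, with coefficients read off from $P^{(1)}(0)=\alpha_1\alpha_2$, $P^{(2)}(0)/2!=-(\alpha_1+\alpha_2)$, $P^{(3)}(0)/3!=1$. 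Since $F_n=I_q(f_n)$ is a multiple integral, I would use Proposition \ref{thm-pasmal} to write $\Gamma_1(F_n)$ and $\Gamma_2(F_n)$ explicitly as (sums of) multiple integrals of iterated symmetrized contractions of $f_n$: $\Gamma_1(F_n)$ produces chaoses of orders $2q-2r$ for $1\le r\le q$, while $\Gamma_2(F_n)$ produces chaoses of orders $3q-2(r_1+r_2)$. Centering removes the zeroth-chaos (expectation) part in each case.

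The core computation is then to collect, chaos-block by chaos-block, the kernel of
\[
T_n:=P^{(1)}(0)\,F_n+\tfrac{P^{(2)}(0)}{2!\,2}\bigl(\Gamma_1(F_n)-E\Gamma_1(F_n)\bigr)+\tfrac{P^{(3)}(0)}{3!\,2^2}\bigl(\Gamma_2(F_n)-E\Gamma_2(F_n)\bigr),
\]
and to recognize that, after grouping terms by the order $k=3q-2(r+s)$ of the ambient chaos, the squared $L^2$-norm $E[T_n^2]$ decomposes (by orthogonality of distinct chaoses) into a sum of squared Hilbert-space norms of symmetrized contraction kernels — precisely the quantities appearing in \textbf{(b1)}, \textbf{(b2)}, \textbf{(b3)}. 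The case-splitting is dictated by which contraction orders $(r,s)$ land in which chaos: the ``resonant'' block $k=q$ (where $r+s=q$) is where the first-order term $F_n$ and, when $q$ is even, the order-$q/2$ contraction from $\Gamma_1$ also contribute, giving \textbf{(b1)}; the blocks $2\le k\le 2q-2$ receive contributions from both $\Gamma_2$ and $\Gamma_1$, giving \textbf{(b2)}; and the blocks $2q-1\le k\le 3q-4$ receive contributions only from $\Gamma_2$, giving \textbf{(b3)}. Condition \textbf{(a)} takes care of the highest chaos $k=2q$ coming from $\Gamma_1$ (present only in the even-$q$ case), which must also vanish for $E[T_n^2]\to0$; together with the hypothesis $E[F_n^2]\to1=E[F_\infty^2]$ and a standard argument showing $E[F_n^2]\to E[F_\infty^2]$, $\kappa_3(F_n)\to\kappa_3(F_\infty)$ follow, so condition \textbf{(ii)}.1 holds as well. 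Finally, since $E\bigl(T_n\mid F_n\bigr)$ has $L^2$-norm at most $\sqrt{E[T_n^2]}$ by Jensen, the conditional $L^2$ condition in \textbf{(ii)}.2 of Theorem \ref{main-general-thm} is implied.

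I expect the main obstacle to be purely bookkeeping: tracking all combinatorial constants through the two nested applications of Proposition \ref{thm-pasmal} and the multiplication formula \eqref{multiplication}, and correctly merging the $\Gamma_1$- and $\Gamma_2$-contributions in each fixed chaos $C_k$ (in particular, handling the symmetrizations $\widetilde{\otimes}$ and the indicator constraints $r<q$, $r+s<3q/2$ that appear in Proposition \ref{thm-pasmal}). One should also be careful about the degenerate subcases — $q$ odd, or $q$ even with $\alpha_1=-\alpha_2$ — in which the order-$q/2$ contraction term disappears, and about checking that no chaos order is double-counted across the three ranges of $k$. Once the kernels are matched, the final implication is immediate from Theorem \ref{main-general-thm} and Jensen's inequality, exactly as in Remark \ref{r:r}.
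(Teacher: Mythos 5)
Your proposal follows essentially the same route as the paper: bound the conditional quantity in Theorem \ref{main-general-thm}\textbf{(ii)}.2 by $E[T_n^2]$ via Jensen, expand $\Gamma_1(F_n)$ and $\Gamma_2(F_n)$ through Proposition \ref{thm-pasmal}, and use orthogonality of the Wiener chaoses to identify $E[T_n^2]$ with the sums in \textbf{(b1)}--\textbf{(b3)}, which is exactly the paper's argument. The only slip is your remark that condition \textbf{(a)} controls a ``highest chaos $k=2q$ coming from $\Gamma_1$'': the top chaos produced by $\Gamma_1(F_n)$ is $2q-2$, and \textbf{(a)} in fact controls the constant term $E(\Gamma_2(F_n))=\kappa_3(F_n)/2$ (the $(r,s)=(q/2,q)$ contribution), i.e.\ the third-cumulant condition — which you do state correctly immediately afterwards.
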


\begin{proof}
In this case, a simple application of Jensen's inequality shows that the second moment of the quantity appearing on the left-hand side of Point $2$ of Theorem \ref{main-general-thm}-$\bf(ii)$ is bounded from 
above by 
\begin{equation*}
\begin{split}
 E\Bigg( \frac{\Gamma_2(F_n) \!-\!  E(\Gamma_2(F_n))}{4} &\! -\! \frac{\alpha_1 + \alpha_2}{2} \Big( \Gamma_1(F_n)\! -\!  E(\Gamma_1(F_n)) \Big) \!+\! \alpha_1 \alpha_2 F_n\!\Bigg)^2.
\end{split}
\end{equation*}
The claim follows immediately from Definition \ref{Def : Gamma}, orthogonality of multiple Wiener integrals, Theorem \ref{main-general-thm} and the fact that when $q$ is even
\begin{equation*}
\kappa_3(F_n)= 2  E(\Gamma_2(F_n))=  2 q q! (\frac{q}{2}-1)! {q-1 \choose \frac{q}{2}-1}^2 \langle f_n{\tilde{\otimes}}_{\frac{q}{2}}f_n, f_n \rangle_{\HH^{\otimes q}}.
\end{equation*}
\end{proof}

In the special case when $\alpha_1= -\alpha_2 =\frac{1}{2}$, the target random variable $F_\infty= I_2(f_\infty)$ in the limit takes the form
\begin{equation}\label{target2}
F_\infty=\frac{1}{2} \left( N^{2}_{1} -1\right) - \frac{1}{2} \left( N^{2}_{2} -1\right) \stackrel{{\rm law}}{=} N_1 N_2
\end{equation}
where $N_1$ and $N_2$ are independent $\mathcal{N}(0,1)$. If the elements $F_n$ of approximating sequence take the special form of multiple Wiener integrals of a fixed order, then we have the following result. One should notice that, in this special case, the result stated below can be alternatively deduced from the findings contained in \cite{et}. For a \textit{free} counterpart of the next result, see \cite[Theorem 1.1]{d-n}.

\begin{coro}\label{q-chaos}
Assume that $F_\infty= I_2(f_\infty)$ given by (\ref{target2}). Let $q\ge2$ and $\{ F_n \}_{n \ge 1}= \{I_q(f_n)\}_{n \ge 1}$ be a sequence of multiple Wiener integrals of order $q$ such that 
\begin{equation*}
\lim_{n \to \infty} E(F_n^2)= 2 \lim_{n \to \infty} \Vert f_n\Vert_{\HH^{\otimes q}}^2 =1.
\end{equation*}
Assume that, as $n$ tends to infinity, we have
\begin{itemize} 
 \item[\bf (a)]$ \langle f_n{\tilde{\otimes}}_{\frac{q}{2}}f_n, f_n \rangle_{\HH^{\otimes q}} \to 0, \quad \text{when} \ q \ \text{is even}$,
\item[\bf (b)] and moreover
\begin{itemize}
\item[\bf (b1)]
\begin{eqnarray*}
&& \Bigg \Vert \mathop{\sum_{r=1}^{q} \sum_{s=1}^{(2q-2r) \wedge q}}_{r+s=q} q^2 (r-1)!(s-1)!{q-1 \choose r-1}^2 \times \\
&& \times {q-1 \choose s-1}  {2q-2r-1  \choose s-1}
 \left( f_n{\tilde{\otimes}}_{r}f_n \right){\tilde{\otimes}}_{s}f_n  - f_n \Bigg\Vert_{\HH^{\otimes q}}^2 \!\!\!\!\!\! \to 0.
\end{eqnarray*}
\item[\bf (b2)] for all $2 \le k \le 3q-4$, we have
\begin{eqnarray*}
&& \Bigg \Vert \mathop{ \mathop{\mathop{\sum_{r=1}^{q} \sum_{s=1}^{(2q-2r) \wedge q}}_{(r,s)\neq(\frac{q}{2},q)}}_{r+s\neq q}}_{3q-2(r+s)=k}  (r-1)!(s-1)!{q-1 \choose r-1}^2  {q-1 \choose s-1} \times \\
&& \times {2q-2r-1  \choose s-1}  \times \left( f_n{\tilde{\otimes}}_{r}f_n \right){\tilde{\otimes}}_{s}f_n  \Bigg\Vert_{\HH ^{\otimes k}}^2 \to 0.
 \end{eqnarray*}
 \end{itemize}
 \end{itemize}
Then,
\begin{equation*}
F_n \stackrel{{\rm law}}{\to} F_\infty.
\end{equation*}
\end{coro}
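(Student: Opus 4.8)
The plan is to obtain the corollary as a direct specialization of the preceding theorem (the one concerning a general pair of distinct eigenvalues $\alpha_1\neq\alpha_2$), taking there $k=2$ and $(\alpha_1,\alpha_2)=(\tfrac12,-\tfrac12)$. No new probabilistic input is needed: everything reduces to simplifying the contraction conditions \textbf{(a)}, \textbf{(b1)}--\textbf{(b3)} of that theorem once the two eigenvalues are fixed. First I would note that the target $F_\infty$ in \eqref{target2} is precisely $\sum_{i=1}^{2}\alpha_i(N_i^2-1)$ for this choice of the $\alpha_i$, so that the normalization $\lim_n E(F_n^2)=1=2(\alpha_1^2+\alpha_2^2)=\kappa_2(F_\infty)$ matches and the theorem applies with these data. (The distributional identity $F_\infty\stackrel{{\rm law}}{=}N_1N_2$ recorded in \eqref{target2} — obtained by noting that $(N_1+N_2)/\sqrt2$ and $(N_1-N_2)/\sqrt2$ are again i.i.d.\ standard Gaussian, with product $\tfrac12(N_1^2-N_2^2)$ — is only context and plays no role in the argument; the alternative derivation through the Variance--Gamma Malliavin--Stein machinery of \cite{et} is also available in this particular case.)

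Second, I would substitute $\alpha_1+\alpha_2=0$ and $\alpha_1\alpha_2=-\tfrac14$. Since $\alpha_1+\alpha_2=0$, every summand carrying the factor $\tfrac{\alpha_1+\alpha_2}{2}$ vanishes: in condition \textbf{(b1)} of the theorem this deletes the middle term involving the contraction of order $q/2$ (in agreement with the parenthetical clause there, valid when $\alpha_1=-\alpha_2$), and in condition \textbf{(b2)} of the theorem it removes the entire second sum, leaving only the family of symmetrized double contractions $(f_n\widetilde{\otimes}_rf_n)\widetilde{\otimes}_sf_n$. In \textbf{(b1)} the surviving target term $\alpha_1\alpha_2 f_n$ becomes $-\tfrac14 f_n$; multiplying the expression inside the norm by $4$ — which multiplies the squared norm by $16$ and hence does not affect its convergence to $0$ — turns the coefficient $\tfrac14 q^2$ into $q^2$ and $-\tfrac14 f_n$ into $-f_n$, i.e.\ into exactly condition \textbf{(b1)} of the corollary.

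Third, I would merge the index ranges. In the theorem, conditions \textbf{(b2)} and \textbf{(b3)} assert the vanishing of the norms of the same family of symmetrized double contractions over the adjacent integer intervals $2\le k\le 2q-2$ and $2q-1\le k\le 3q-4$; their union is $2\le k\le 3q-4$ (for $q=2$ all of these are vacuous), which is precisely the range in condition \textbf{(b2)} of the corollary, while condition \textbf{(a)} is literally unchanged. Hence, under the stated hypotheses, assumptions \textbf{(a)}, \textbf{(b1)}--\textbf{(b3)} of the preceding theorem hold for $\alpha_1=-\alpha_2=\tfrac12$, and the conclusion $F_n\stackrel{{\rm law}}{\to}F_\infty$ follows at once.

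I do not expect a genuine obstacle here: the whole analytic content already lies in the preceding theorem (and, behind it, in Theorem~\ref{main-general-thm} and Proposition~\ref{thm-pasmal}), and the corollary is purely a matter of tracking constants. The only point that deserves a moment's care is checking that, after setting $\alpha_1+\alpha_2=0$, the simplified requirements \textbf{(b1)} and \textbf{(b2)} are indeed equivalent to the full list \textbf{(b1)}--\textbf{(b3)} of the theorem for these eigenvalues — i.e.\ that no chaotic component of $\tfrac14\big(\Gamma_2(F_n)-E[\Gamma_2(F_n)]\big)-\tfrac14 F_n$ is lost or double-counted when the two $k$-ranges are glued; this is handled exactly as in the proof of the preceding theorem, using the explicit chaotic expansion of Proposition~\ref{thm-pasmal} together with the orthogonality of multiple Wiener integrals.
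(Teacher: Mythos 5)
Your argument is correct and, modulo packaging, it is the paper's own proof: the paper re-runs for this special case the Jensen-inequality/orthogonality computation behind the two-eigenvalue theorem (bounding the conditional second moment in Theorem \ref{main-general-thm}\textbf{(ii)} by $E\big(\tfrac{\Gamma_2(F_n)-E\Gamma_2(F_n)}{4}-\tfrac14 F_n\big)^2$ and expanding it into orthogonal chaotic pieces), whereas you obtain the corollary by literally specializing that theorem at $\alpha_1=-\alpha_2=\tfrac12$, which rests on exactly the same computation. Your bookkeeping is accurate throughout: the terms carrying $\tfrac{\alpha_1+\alpha_2}{2}$ vanish, rescaling \textbf{(b1)} by the harmless factor $4$ recovers the stated condition, and the ranges $2\le k\le 2q-2$ and $2q-1\le k\le 3q-4$ glue (by orthogonality of the chaoses, with no component lost or double-counted) into $2\le k\le 3q-4$.
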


\begin{proof}
In this special case, similarly, a simple application of Jensen's inequality shows that the second moment of the quantity appearing on the left-hand side of Point $2$ of Theorem 
\ref{main-general-thm}-$\bf(ii)$ is bounded from 
above by
\begin{equation*}
 E\Bigg( \frac{\Gamma_2(F)}{4} -  \frac{ E(\Gamma_2(F))}{4} - \frac{1}{4}F\Bigg)^2=:  E( A_1 +A_2)^2
\end{equation*}
where
\begin{eqnarray*}\label{A_1}
&& A_1= \frac{1}{4} I_q\Bigg( \mathop{\sum_{r=1}^{q} \sum_{s=1}^{(2q-2r) \wedge q}}_{r+s=q} q^2 (r-1)!(s-1)!{q-1 \choose r-1}^2  \times \\
&& \times {q-1 \choose s-1}  {2q-2r-1  \choose s-1} 
  \times \left( f_n{\tilde{\otimes}}_{r}f_n \right){\tilde{\otimes}}_{s}f_n  - f_n \Bigg)\\
 \end{eqnarray*}
and

\begin{eqnarray*}\label{A_2}
&& A_2 = \frac{1}{4}\mathop{\mathop{\sum_{r=1}^{q} \sum_{s=1}^{(2q-2r) \wedge q}}_{(r,s)\neq(\frac{q}{2},q)}}_{r+s\neq q} q^2 (r-1)!(s-1)!{q-1 \choose r-1}^2  \times \\
&& \times {q-1 \choose s-1}  {2q-2r-1  \choose s-1}
 \times I_{3q-2r-2s}\Big( (f_n{\tilde{\otimes}}_{r}f_n){\tilde{\otimes}}_{s}f_n\Big)\\
&& = \frac{1}{4}\mathop{\sum_{k=1}^{3q-4}}_{k \neq q} I_{k}\Bigg( \mathop{\mathop{\sum_{r=1}^{q} \sum_{s=1}^{(2q-2r) \wedge q} }_{(r,s)\neq(\frac{q}{2},q)}}_{3q-2(r+s)=k} q^2 (r-1)!(s-1)!{q-1 \choose r-1}^2 \times \\ 
&& \times {q-1 \choose s-1}  {2q-2r-1  \choose s-1}
 \times (f_n{\tilde{\otimes}}_{r}f_n){\tilde{\otimes}}_{s}f_n\Bigg)
\end{eqnarray*}
Now the claim follows immediately by orthogonality of multiple Wiener integrals, Theorem \ref{main-general-thm} and the fact that when $q$ is even
\begin{equation*}
\kappa_3(F_n)= 2  E(\Gamma_2(F_n))=  2 q q! (\frac{q}{2}-1)! {q-1 \choose \frac{q}{2}-1}^2 \langle f_n{\tilde{\otimes}}_{\frac{q}{2}}f_n, f_n \rangle_{\HH^{\otimes q}}.
\end{equation*}

\end{proof}

\begin{rmk}{\rm
Notice that when $q$ is odd, the assumption $\bf (a)$ of Theorem \ref{q-chaos} and restriction $(r,s)\neq(\frac{q}{2},q)$ in the sums of $\bf (b2)$ can be removed. In other words, it is known that for any multiple 
Wiener integral $F=I_q(f)$ of odd order, we have $\kappa_3(F)= 2  E(\Gamma_2(F))=0$.}
\end{rmk}

\begin{ex}{ \rm
Let $q \ge 2$ be an even integer. Consider two sequences $\{G_n\}_{n\ge 1}=\{I_q(g_n)\}_{n\ge 1}$ and $\{H_n\}_{n\ge 1}=\{I_q(h_n)\}_{n\ge 1}$ of multiple Wiener integrals of order $q$ where 
$g_n, h_n \in \HH^{\odot q}$ for $n \ge 1$. 
We assume that as $n$ tends to infinity we have 
\begin{itemize}
 \item[(a)] $G_n \stackrel{\text{law}}{\to} G_\infty \stackrel{\text{law}}{=} \frac{1}{2} (N^2-1)$.
 \item[(b)] $H_n \stackrel{\text{law}}{\to} H_\infty \stackrel{\text{law}}{=}\frac{1}{2} (N^2-1)$.
\item[(c)] $\text{Cov}(G_n^2,H_n^2) \to 0$.
 \end{itemize}
We consider the sequence $\{ F_n\}_{n \ge 1}$, where
\begin{equation*}
F_n= I_q(f_n):= G_n -H_n =I_q(g_n-h_n), \quad n\ge 1.
\end{equation*}
Then \cite[Theorem 4.5.]{n-ro} implies that as $n$ tends to infinity, we have 
\begin{equation*}
(G_n,H_n) \stackrel{{\rm law}}{\to} (G_\infty,H_\infty).
\end{equation*}
Hence, in particular we obtain that $F_n \stackrel{\text{law}}{\to} F_\infty$ where $F_\infty$ is given by (\ref{target2}). We can also justify the later convergence with the help of our result, namely 
Theorem \ref{main-general-thm}. To this end, first notice that relation $(3.26)$ of \cite{n-ro} implies that 
\begin{equation}\label{covariance-inequality}
\text{Cov}(G_n^2,H_n^2) \ge  E(G_nH_n).
\end{equation}
Therefore, using assumption $(c)$ we obtain that $\kappa_2(F_n) \to \kappa_2(F_\infty)=1$. Moreover, according to \cite[Theorem 1.2]{n-pe-2} point (iii), assumption (a) implies that for constant 
$c_q=4 \frac{{(\frac{q}{2})!}^3}{{q!}^2}$, we have $\Vert g_n{\tilde{\otimes}}_{\frac{q}{2}}g_n - c_q g_n\Vert_{\HH^{\otimes q}} \to 0$. Hence
\begin{eqnarray*}
&&\left \vert  E(G_n^2 H_n) \right \vert\\ 
&&= \left \vert\langle g_n{\tilde{\otimes}}_{\frac{q}{2}}g_n,h_n\rangle_{\HH^{\otimes q}} 
\right \vert \\
&&= \left \vert \langle g_n{\tilde{\otimes}}_{\frac{q}{2}}g_n - c_q g_n,h_n\rangle_{\HH^{\otimes q}} 
  + c_q \langle g_n,h_n\rangle_{\HH^{\otimes q}} \right \vert\\
&& \le \Vert g_n{\tilde{\otimes}}_{\frac{q}{2}}g_n - c_q g_n\Vert_{\HH^{\otimes q}}^2 \Vert h_n \Vert_{\HH^{\otimes q}}^2 +c_q q!  E(G_n H_n) \\
&& \to 0,
\end{eqnarray*}
by assumptions (a), (b), and $(\ref{covariance-inequality})$. In a similar way, one can see that $ E(G_n H_n^2) \to 0$. Hence

\begin{equation*}
\kappa_3(F_n)=\kappa_3(G_n)-3 E(G_n^2 H_n) +3  E(G_n H_n^2) - \kappa_3(H_n) \to 0.
\end{equation*}
To complete, notice that (see \cite[Theorem 3.1]{n-ro}) Assumption (c) tells us that $$\Vert g_n {\tilde{\otimes}}_{r} h_n \Vert_{\HH^{\otimes (2q-2r)}} \to 0, \quad \forall r=1, 2, \cdots, q.$$
Using this observation together with straightforward computations, one can see that for the sequence $\{F_n\}_{n \ge 1}$ we have
\begin{equation*}
 E\Bigg( \frac{\Gamma_2(F_n)}{4} -  \frac{ E(\Gamma_2(F_n))}{4} - \frac{1}{4}F_n \Bigg)^2 \to 0.
\end{equation*}

}
\end{ex}

\section{Appendix}
\begin{lemma}\label{lemma-appendix}
Let $\{a_k\}_{k \in \N}$ and $\{b_k\}_{k \in \N}$ be two sequences in $l^{1}(\N)$ such that for all $p \ge 1$ we have 

\begin{equation}\label{=sums}
\sum_{k\in \N} a_k^p = \sum_{k \in \N} b_k^p.
\end{equation}
Then, there exists a permutation $\pi$ on natural numbers $\N$ such that $a_k = b_{\pi(k)}$ for all $k \ge 1$.
\end{lemma}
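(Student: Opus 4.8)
The plan is to prove that $\{a_k\}$ and $\{b_k\}$ coincide as multisets, and then read off $\pi$; the multiset identity is obtained by a ``peeling'' argument that isolates, one modulus at a time, the terms of largest absolute value via the behaviour of the power sums $\sum_k a_k^{p}$ as $p\to\infty$.

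First I would note that all the series are absolutely convergent: $\{a_k\}\in\ell^{1}(\N)$ forces $a_k\to0$, so $|a_k|\le1$ for all but finitely many $k$ and hence $\sum_k|a_k|^{p}<\infty$ for every $p\ge1$ (similarly for $\{b_k\}$). Discarding the vanishing terms — which alters none of the power sums — I assume $a_k\neq0\neq b_k$ for all $k$. Put $\alpha=\sup_k|a_k|$ and $\beta=\sup_k|b_k|$, both attained since $a_k,b_k\to0$. Using the elementary inequalities $\|x\|_{\infty}\le\|x\|_{2p}\le\|x\|_{\infty}^{1-1/p}\,\|x\|_{2}^{1/p}$, valid for $x\in\ell^{2}(\N)$, one gets $\|a\|_{2p}\to\alpha$ and $\|b\|_{2p}\to\beta$; since $\sum_k a_k^{2p}=\sum_k b_k^{2p}$ for every $p$, this yields $\alpha=\beta=:c$.

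Next I would determine the multiplicities of the extremal values. Set $m_a^{+}=\#\{k:a_k=c\}$, $m_a^{-}=\#\{k:a_k=-c\}$, and define $m_b^{\pm}$ analogously; all four are finite because $a_k,b_k\to0$. Dividing $\sum_k a_k^{p}=\sum_k b_k^{p}$ by $c^{p}$ and letting $p\to\infty$ along odd, then along even, integers — the indices with $|a_k|<c$ contribute $0$ in the limit by dominated convergence, with dominating summable sequence $\{|a_k|/c\}$ — one obtains $m_a^{+}-m_a^{-}=m_b^{+}-m_b^{-}$ and $m_a^{+}+m_a^{-}=m_b^{+}+m_b^{-}$, hence $m_a^{+}=m_b^{+}$ and $m_a^{-}=m_b^{-}$. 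Now remove these $m_a^{+}$ copies of $c$ and $m_a^{-}$ copies of $-c$ from $\{a_k\}$, and the same number of copies of $\pm c$ from $\{b_k\}$: the truncated sequences lie in $\ell^{1}(\N)$, still satisfy the power-sum identities for all $p\ge1$ (one subtracts the same quantity $m_a^{+}c^{p}+m_a^{-}(-c)^{p}$ from both sides), and have strictly smaller sup-norm. The distinct moduli $\{|a_k|\}$ accumulate only at $0$, so they can be listed as $c_{1}>c_{2}>\cdots$ (finite or infinite); iterating the construction at $c_{1}$, then $c_{2}$, and so on, shows by induction that $\#\{k:a_k=v\}=\#\{k:b_k=v\}$ for every $v\neq0$, i.e. $\{a_k\}$ and $\{b_k\}$ are equal as multisets. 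Since each nonzero value occurs with the same finite multiplicity in both sequences, a permutation $\pi$ of $\N$ with $a_k=b_{\pi(k)}$ is then produced by matching indices value by value.

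The one point requiring genuine care is that each peeling step must preserve the hypothesis; this is exactly what the two asymptotics (odd and even $p$) ensure, by forcing the signed and the unsigned multiplicities of the current largest modulus to coincide before it is removed. I note in passing that one may alternatively avoid the induction by a complex-analytic argument: the function $z\mapsto\sum_k a_k z/(1-a_k z)$ is holomorphic near $0$ with Taylor coefficients $\sum_k a_k^{p}$, hence agrees near $0$, and therefore by analytic continuation as a meromorphic function on $\C$, with $z\mapsto\sum_k b_k z/(1-b_k z)$; comparing the simple poles and their residues at the points $1/v$ recovers $\#\{k:a_k=v\}=\#\{k:b_k=v\}$ for all $v\neq0$.
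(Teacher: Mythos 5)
Your proof is correct, but it follows a genuinely different route from the paper's. The paper's argument first observes that the hypothesis is linear in $p$, so that $\sum_k a_k P(a_k)=\sum_k b_k P(b_k)$ for every polynomial $P$ with $P(0)$ arbitrary; it then invokes Weierstrass approximation on $[-M,M]$, where $M$ bounds the $\ell^1$-norms (the weight $a_k$ in front of $P(a_k)$ is what lets the $\ell^1$ hypothesis control the approximation error), to extend the identity to all continuous $\varphi$, and finally tests against a continuous bump function equal to $1$ at a given nonzero value and vanishing at every other value taken by either sequence, which forces the multiplicities of that value in the two sequences to coincide. You instead work directly with the power sums: the $\ell^{2p}$--$\ell^{\infty}$ interpolation identifies the common top modulus $c$, the limits of $c^{-p}\sum_k a_k^p$ along even and odd $p$ separate the multiplicities of $+c$ and $-c$ (your dominated-convergence step, with dominating sequence $|a_k|/c$, is exactly where the $\ell^1$ hypothesis enters), and peeling plus induction over the decreasing sequence of distinct moduli gives the multiset identity. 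Both arguments are elementary; the paper's is slightly slicker (one density step plus a family of test functions), while yours avoids Stone--Weierstrass entirely and is more quantitative about how the largest values dominate. Your parenthetical alternative via the meromorphic function $z\mapsto\sum_k a_k z/(1-a_k z)$ is essentially the complex-analytic argument of Nourdin and Poly that this paper is otherwise at pains to avoid. One caveat shared by your proof and the paper's: both establish equality of the multisets of \emph{nonzero} values, and the final production of a permutation of all of $\N$ tacitly assumes the two sequences contain the same number of zero entries; this is harmless for the way the lemma is applied (only $\sum_k\lambda_k^2$ is needed), but the literal statement requires this extra matching of zeros.
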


\begin{proof}
 Let $\R[X]$ denote the ring of all polynomials over real line. Then, relation $(\ref{=sums})$ implies that for any polynomial $P \in \R[X]$, we have 
 
 \begin{equation}
  \sum_{k \in \N} a_k P(a_k) = \sum_{k \in \N} b_k P(b_k).
 \end{equation}
Let $M:= \max \{ \Vert a \Vert_{l^1(\N)}, \Vert b \Vert_{l^1(\N)} \} < \infty$. Then by a density argument, for any continuous function $\varphi \in C([-M,M])$, we obtain 
\begin{equation}
\sum_{k \in \N} a_k \varphi(a_k) = \sum_{k \in \N} b_k \varphi(b_k).
\end{equation}
For any $i\in\N$, we can now choose a continuous function $\varphi$ such that $\varphi(a_i)=1$ and $\varphi =0$ on the set $\{a_j| a_j \neq a_i\}\cup \{b_j| b_j \neq a_i\}$. This implies that, 
for some integer $k_i$, we have $a_i=b_{k_i}$. It is now sufficient to take $\pi(i)=k_i$.
\end{proof}

\medskip

\noindent{\bf Acknoledgements}.  We thank P. Eichelsbacher and Ch. Th\"ale for discussing with us, at a preliminary stage, the results contained in reference \cite{et}. EA \& GP were partially supported by the Grant F1R-MTH-PUL-12PAMP
(PAMPAS) from Luxembourg University.

\end{document}